\newtheorem{thm}{Theorem}[section]
 \newtheorem{cor}{Corollary}[section]
 \newtheorem{lem}{Lemma}[section]
 \newtheorem{prop}{Proposition}[section]
 \newtheorem{defn}{Definition}[section]
\theoremstyle{remark}
\newtheorem{rem}{Remark}[section]
\title{Relaxation limit in larger Besov spaces for compressible Euler equations}
\author{Jiang Xu\thanks {E-mail: jiangxu\underline{ }79@yahoo.com.cn}\\
\small{\textit{Department of Mathematics}},\\\small{\textit{Nanjing
University of Aeronautics and Astronautics}},
\\ \small{\textit{Nanjing 211106, P.R.China}}\\[5mm]
Shuichi Kawashima\thanks{E-mail: kawashim@math.kyushu-u.ac.jp}\\
\small{\textit{Graduate School of Mathematics}},\\
\small{\textit{Kyushu University, Fukuoka 812-8581, Japan}}}
\date{}
\begin{document}
\maketitle{} \begin{abstract}The work is devoted to the relaxation
limit in larger Besov spaces for compressible Euler equations, which
contains previous results in Sobolev spaces and Besov spaces with
critical regularity. Such an extension depends on a revision of
commutator estimates and an elementary fact which indicates the
connection between homogeneous and inhomogeneous Chemin-Lerner
spaces.
\end{abstract}

\hspace{-0.5cm}\textbf{Keywords.} \small{compressible Euler
equations,
 relaxation limit, Chemin-Lerner spaces}\\

\hspace{-0.5cm}\textbf{AMS subject classification:} \small{35L25,\
35L45,\ 76N15}

\section{Introduction}
In this paper, we consider the following nondimensional compressible
Euler equations
\begin{equation}
\left\{
\begin{array}{l}
\partial_{t}\rho + \nabla\cdot(\rho\textbf{v}) = 0 , \\
\partial_{t}(\rho\textbf{v}) +\nabla\cdot(\rho\textbf{v}\otimes\textbf{v}) +
\nabla p(\rho) =-\rho\textbf{v}/\tau
\end{array} \right.\label{R-E1}
\end{equation}
for $(t,x)\in[0,+\infty)\times\mathbb{R}^{N}$ with $N\geq1$. Here
 $\rho = \rho(t, x)$ is the fluid density function;
$\textbf{v}=(v^1,v^2,\cdot\cdot\cdot,v^{d})^{\top}$($\top$
represents the transpose) denotes the fluid velocity. The pressure
$p(\rho)$ satisfies the classical assumption
$$p'(\rho)>0, \ \ \ \mbox{for any}\ \ \rho>0.$$
An usual simplicity $p(\rho):=\rho^{\gamma}(\gamma\geq 1)$, where
the adiabatic exponent $\gamma>1$ corresponds to the isentropic flow
and $\gamma=1$ corresponds to the isothermal flow. The
nondimensional number $0<\tau\leq1$ is a (small) relaxation time.
The notation $\nabla,\otimes$ are the gradient operator (in $x$) and
the symbol for the tensor products of two vectors, respectively.

System (\ref{R-E1}) is complemented by the initial conditions
\begin{equation}
(\rho,\textbf{v})(0,x)=(\rho_{0},\textbf{v}_{0}).\label{R-E2}
\end{equation}

For fixed $\tau>0$, as we all know, the relaxation term which plays
the role of damping can prevent the finite time blow-up and the
Cauchy problem (\ref{R-E1})-(\ref{R-E2}) admits a unique global
classical solution, provided the initial data is small under certain
norms. In this direction, such problem was widely studied by many
authors, see e.g. \cite{HL,HP2,HMP,LW,N2,NY,STW,WY} and references
therein. In addition, it is proved that the solutions in \cite{STW}
has the $L^{\infty}$ convergence rate $(1+t)^{-3/2}(N=3)$ to the
constant background state and the optimal $L^{p}(1<p\leq\infty)$
convergence rate $(1+t)^{-N/2(1-1/p)}$ in general several dimensions
\cite{WY}, respectively. In one space dimension in Lagrangian
coordinates, Nishida \cite{N2} obtained the global classical
solutions with small data, and the solutions following Darcy's law
asymptotically as time tends to infinity was shown by Hsiao and Liu
\cite{HL}. For the large-time behavior of solutions with vacuum, the
reader is referred to \cite{HP2,HMP}. Nishihara and Yang \cite{NY}
studied the boundary effect on the asymptotic behavior of the
solutions to the one-dimensional initial-boundary value problem.
Later, Liu and Wang \cite{LW} considered the 2-D initial-boundary
value problem in the wedge-space, and proved the global existence of
classical solutions.

Another interesting line of research is to justify the singular
limit as $\tau\rightarrow0$ in (\ref{R-E1}).  To do this, we
introduce the time variable by considering an
``$\mathcal{O}(1/\tau)$" time scale
\begin{equation}(\rho^{\tau},\textbf{v}^{\tau})(s,x)=\Big(\rho,\textbf{v}\Big)\Big(\frac{s}{\tau},x\Big). \label{R-E3} \end{equation}
Then
\begin{equation}
\left\{
\begin{array}{l}\partial_{s}\rho^{\tau}+\nabla\cdot(\frac{\rho^{\tau}\textbf{v}^{\tau}}{\tau})=0,\cr
 \tau^2\partial_{s}(\frac{\rho^{\tau}\textbf{v}^{\tau}}{\tau})+\tau^2\nabla\cdot(\frac{\rho^{\tau}\textbf{v}^{\tau}\otimes\textbf{v}^{\tau}}{\tau^2})+\frac{\rho^{\tau}\textbf{v}^{\tau}}{\tau}=-\nabla
 p(\rho^{\tau})
\end{array} \right.\label{R-E4}
\end{equation}
with the initial data
\begin{equation}(\rho^{\tau},\textbf{v}^{\tau})(x,0)=(\rho_{0},
\textbf{v}_{0}).\label{R-E5}\end{equation} At the formal level, at
least, if we can assume that $\frac{\rho^\tau
\mathbf{v}^\tau}{\tau}$ is uniformly bounded, it will be shown that
the limit $\mathcal{N}$ of $\rho^{\tau}$ as $\tau\rightarrow0$
satisfies the classical porous medium equation
\begin{equation} \left\{
\begin{array}{l}\partial_{s}\mathcal{N}-\Delta p(\mathcal{N})=0,\\
\mathcal{N}(x,0)=\rho_{0},
\end{array} \right.\label{R-E6}
\end{equation}
which is a parabolic equation since $p(\mathcal{N})$ is strictly
increasing.

This singular limit from hyperbolic relaxation to parabolic
equations have attracted much attention, see
\cite{CG,LC,JR,MM,MMS,MR,X} and therein references. The relaxation
results mentioned for smooth solutions fell in the framework of the
classical existence theory of Kato and Majda\cite{K,M}. The
regularity index of Sobolev spaces (in $x$)
$H^{\sigma}(\mathbb{R}^{N})$ is assumed to be high ($\sigma>1+N/2$
with \textit{integer}). Recently, the first author and Wang
\cite{XW} studied the limit case of regularity index
($\sigma=1+N/2$) where the classical theory fails. They developed a
new commutator estimate to overcome the technique difficulty and
constructed global classical solutions in the critical Besov spaces
$B^{1+N/2}_{2,1}(\mathbb{R}^{N})$. Furthermore, based on the
Aubin-Lions compactness lemma in \cite{S}, it was justified that the
(scaled) density converges to the solution of the porous medium
equation.

The main purpose of this paper is to generalize the relaxation
limits in Sobolev spaces with higher regularity and Besov spaces
with critical regularity. Due to the elementary fact that Sobolev
spaces $H^{\sigma}(\mathbb{R}^{N}):=B^\sigma_{2,2}(\mathbb{R}^{N})$,
a natural question is whether those results hold in larger Besov
spaces $B^{\sigma}_{2,r}(\mathbb{R}^{N})$ or not, whose indices
satisfy the following condition:
\begin{eqnarray} \sigma>1+N/2, \ 1\leq r\leq2\ \ \mbox{or}\ \ \sigma=1+N/2,\  r=1. \label{R-E666}\end{eqnarray}
In the present paper, we shall answer the question. The main
difficulty lies in the a priori nonlinear estimates, in particular,
the commutator estimates. To overcome it, we need a more general
version of commutator estimates in Proposition \ref{prop2.3} (also
see \cite{D}), which relaxes the restriction on the couple $(s,r)$.
For completeness and the reader convenience, we present the proof in
Appendix with the aid of the Bony's decomposition. Besides, this
extension also heavily depends on an elementary fact developed for
general hyperbolic system of balance laws (see Lemma \ref{lem4.1} or
\cite{XK}), which indicates the connection between homogeneous and
inhomogeneous Chemin-Lerner spaces. Precisely, our results are
stated as follows.

\begin{thm}\label{thm1.1}
Let the couple $(\sigma, r)$ satisfy the condition (\ref{R-E666})
and let $\bar{\rho}>0$ be a constant reference density. There exists
a positive constant $\delta_{0}$ independent of $\tau$ such that if
$$\|(\rho_{0}-\bar{\rho},\mathbf{m}_{0})\|_{B^{\sigma}_{2,r}(\mathbb{R}^{N})}\leq
\delta_{0}$$ with $\mathbf{m}_{0}:=\rho_{0}\mathbf{v}_{0}$, then the
Cauchy problem (\ref{R-E1})-(\ref{R-E2}) has a unique global
classical solution $(\rho,\mathbf{m})\in
\mathcal{C}^{1}(\mathbb{R}^{+}\times \mathbb{R}^{N})$ satisfying $
(\rho-\bar{\rho},\mathbf{m}) \in
\widetilde{\mathcal{C}}(B^{\sigma}_{2,r}(\mathbb{R}^{N}))\cap
\widetilde{\mathcal{C}}^1(B^{\sigma-1}_{2,r}(\mathbb{R}^{N})). $
Furthermore, the global solutions satisfy the inequality
\begin{eqnarray}
&&\|(\rho-\bar{\rho},\mathbf{m})\|_{\widetilde{L}^\infty(B^{\sigma}_{2,r}(\mathbb{R}^{N}))}
+\mu_{0}\Big(\Big\|\frac{\mathbf{m}}{\sqrt{\tau}}\Big\|_{\widetilde{L}^2(B^{\sigma}_{2,r}(\mathbb{R}^{N}))}
+\Big\|\sqrt{\tau}(\nabla\rho,\nabla\mathbf{m})\Big\|_{\widetilde{L}^2(B^{\sigma-1}_{2,r}(\mathbb{R}^{N}))}\Big)
\nonumber\\&\leq& C_{0}\|(\rho_{0}-\bar{\rho},
\mathbf{m}_{0})\|_{B^{\sigma}_{2,r}(\mathbb{R}^{N})}, \label{R-E7}
\end{eqnarray}
where $\mathbf{m}:=\rho\mathbf{v}$, $\mu_{0}$ and $C_{0}$ are some
uniform positive constants independent of $\tau (0<\tau\leq1)$.
\end{thm}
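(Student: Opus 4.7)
The plan is to recast (\ref{R-E1}) as a partially dissipative symmetric hyperbolic system for the unknowns $n:=\rho-\bar{\rho}$ and $\mathbf{m}:=\rho\mathbf{v}$, and then to derive uniform-in-$\tau$ a priori estimates in the Chemin--Lerner framework. The conservative form reads
\begin{equation*}
\partial_{t}n+\nabla\cdot\mathbf{m}=0,\qquad
\partial_{t}\mathbf{m}+\nabla\cdot\Bigl(\frac{\mathbf{m}\otimes\mathbf{m}}{n+\bar{\rho}}\Bigr)+\nabla p(n+\bar{\rho})=-\frac{\mathbf{m}}{\tau},
\end{equation*}
which, linearised around $(\bar{\rho},0)$, is a hyperbolic system whose dissipation acts only on the $\mathbf{m}$-block. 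Local existence in $B^{\sigma}_{2,r}$ for each endpoint of the couple $(\sigma,r)$ follows from a standard Friedrichs iteration, so the whole matter reduces to proving a uniform a priori bound, which, once $\delta_{0}$ is small enough, extends the local solution to all time by a continuity argument.

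The core step is a frequency-localised analysis: apply $\Delta_{j}$ to each equation and write
\begin{equation*}
\partial_{t}\Delta_{j}n+\nabla\cdot\Delta_{j}\mathbf{m}=R^{1}_{j},\qquad
\partial_{t}\Delta_{j}\mathbf{m}+p'(\bar{\rho})\nabla\Delta_{j}n+\frac{1}{\tau}\Delta_{j}\mathbf{m}=R^{2}_{j},
\end{equation*}
where $R^{1}_{j},R^{2}_{j}$ collect paralinearisation errors, commutators with the transport field, and semilinear remainders. A direct $L^{2}$-energy pairing yields control of $\|\Delta_{j}(n,\mathbf{m})\|_{L^{2}}$ together with the damping dissipation $\tau^{-1/2}\|\Delta_{j}\mathbf{m}\|_{L^{2}_{t}L^{2}}$. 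To recover the parabolic-type dissipation on $\nabla n$ and $\nabla\mathbf{m}$ (the $\sqrt{\tau}$-weighted term in (\ref{R-E7})) I would run the Shizuta--Kawashima cross pairing, testing the momentum equation against $\tau\,\nabla\Delta_{j}n$ and using $\partial_{t}n=-\nabla\cdot\mathbf{m}$ to integrate by parts in time; this produces the non-negative definite quantity $\tau p'(\bar{\rho})\|\nabla\Delta_{j}n\|_{L^{2}}^{2}$, while the damping absorbs $\tau\|\nabla\cdot\Delta_{j}\mathbf{m}\|_{L^{2}}^{2}$. Combining the two pairings into a Lyapunov functional equivalent to $\|\Delta_{j}(n,\mathbf{m})\|_{L^{2}}^{2}$ gives exactly the blockwise version of (\ref{R-E7}) with constants independent of both $j$ and $\tau$.

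Multiplying the blockwise estimate by $2^{j\sigma}$ and taking $\ell^{r}$ in $j$ converts the pointwise-in-time bound into the $\widetilde{L}^{\infty}_{t}(B^{\sigma}_{2,r})$ and $\widetilde{L}^{2}_{t}(B^{\sigma}_{2,r})$ norms appearing in the theorem. The source terms $R^{1}_{j},R^{2}_{j}$ are then estimated via the Bony decomposition and product laws, together with the generalised commutator Proposition \ref{prop2.3}; the relaxation of the hypotheses on $(s,r)$ carried out there is precisely what allows the range $1\leq r\leq 2$ with $\sigma>1+N/2$, and the critical pair $r=1$, $\sigma=1+N/2$, simultaneously. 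The bridge between the homogeneous Littlewood--Paley estimates that arise naturally in the high-frequency analysis and the inhomogeneous Besov norm in the statement is provided by Lemma \ref{lem4.1}.

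The principal obstacle I expect is the commutator control of the convective derivative $\mathbf{v}\cdot\nabla U$ at the endpoint $\sigma=1+N/2$, $r=1$, where the classical commutator lemma breaks down; Proposition \ref{prop2.3} is designed to handle exactly this situation but requires a delicate splitting between paraproduct, remainder, and high--high interactions, and one must verify that no constant silently degenerates as $\tau\to 0$. Once the uniform a priori inequality (\ref{R-E7}) is in hand, the smallness of $\delta_{0}$ closes the bootstrap and a standard continuation criterion in $\widetilde{\mathcal{C}}(B^{\sigma}_{2,r})\cap\widetilde{\mathcal{C}}^{1}(B^{\sigma-1}_{2,r})$ upgrades the local solution to a global classical one.
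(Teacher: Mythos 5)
Your overall strategy coincides with the paper's: frequency localization, an $L^{2}$ energy pairing for the zero-order dissipation, a Shizuta--Kawashima compensating mechanism for the $\sqrt{\tau}$-weighted gradient dissipation, the relaxed commutator estimates to reach the full range (\ref{R-E666}), Lemma \ref{lem4.1} to pass between homogeneous and inhomogeneous Chemin--Lerner norms, and a continuation argument from the local theory. Your cross pairing of the momentum equation against $\tau\nabla\Delta_{j}n$ is the physical-space avatar of the paper's Fourier-side multiplication by $K(\xi)$ from Lemma \ref{lem4.2}; the two are equivalent, and the paper applies the compensating matrix only to the system linearized about $\bar{W}$, dumping all nonlinearities into a source $\mathcal{G}$ estimated by the product laws, exactly as you propose.

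There are, however, two places where your sketch as written would not close. First, you start from the conservative, non-symmetric form and claim that ``a direct $L^{2}$-energy pairing'' controls $\|\Delta_{j}(n,\mathbf{m})\|_{L^{2}}$. For a quasilinear system this is false unless the coefficient matrices are symmetric: the principal terms $\langle A^{j}(W)\partial_{x_{j}}\Delta_{j}W,\Delta_{j}W\rangle$ reduce to harmless $\partial_{x_{j}}A^{j}$ contributions only after integration by parts exploiting symmetry, and otherwise you lose a derivative at the top regularity $\sigma$. The paper therefore first rewrites (\ref{R-E1}) in entropy variables as the symmetric system (\ref{R-E9}); the specific block structure of $A^{0}_{II}$ and $A^{j}_{II}$ (null first row and column) is then used to make sure every quadratic energy term is paired with at least one factor of $W_{2}$ or $\nabla W$, which is what keeps the estimate of the form $E(T)D_{\tau}(T)^{2}$ uniformly in $\tau$ --- note in particular that the commutator $[\dot{\Delta}_{q},A^{0}(W)]\partial_{t}W$ contains the singular factor $\tau^{-1}W_{2}$ through $\partial_{t}W$, and closing it requires matching each $\tau^{-1}$ against the dissipation $\tau^{-1/2}\|W_{2}\|_{\widetilde{L}^{2}_{T}(B^{\sigma}_{2,r})}$. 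You should either symmetrize as in Section \ref{sec:3} or supply an equivalent symmetrizer before invoking the energy pairing. Second, Lemma \ref{lem4.1} is an equality $L^{\theta}_{T}(L^{p})\cap\widetilde{L}^{\theta}_{T}(\dot{B}^{s}_{p,r})=\widetilde{L}^{\theta}_{T}(B^{s}_{p,r})$: to use it you must produce the $L^{\infty}_{T}(L^{2})$ bound on $W-\bar{W}$ and the $L^{2}_{T}(L^{2})$ bound on $W_{2}$ as separate inputs, since the homogeneous high-frequency analysis gives only the $\dot{B}^{\sigma}_{2,r}$ half. The paper obtains these from the relative entropy identity (\ref{R-E41}); your proposal cites the lemma as a ``bridge'' but never supplies this low-frequency ingredient. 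Both gaps are repairable by standard means, but they are genuine omissions rather than routine details.
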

\begin{rem}
It should be pointed out Theorem \ref{thm1.1} contains previous
results (see \cite{CG,LC,STW,WY,XW} and the references therein) for
compressible Euler equations. The proof depends on the Fourier
localization methods and Shizuta-Kawashima algebraic condition which
has been well developed by the second author \textit{et al.}
\cite{KY,Y} for generally hyperbolic systems of balance laws. In
particular, a concrete entropy for current compressible Euler
equations is available, which has been verified in, \textit{e.g.},
\cite{LC,XK}. In addition,  we track the singular parameter $\tau$
in the uniform energy inequality (\ref{R-E7}), which plays a key
role in the analysis of relaxation limit problem.
\end{rem}

As a direct consequence, using the standard weak convergence method
and Aubin-Lions compactness lemma (\cite{S}), we further have the
analogue relaxation convergence as in \cite{XW}.

\begin{thm}\label{thm1.2} Let $(\rho,\mathbf{m})$ be the global solution of Theorem
\ref{thm1.1}. Then it yields
$$\rho^{\tau}-\bar{\rho}\ \ \ \mbox{is uniformly bounded in}\ \ \mathcal{C}(\mathbb{R}^{+},B^{\sigma}_{2,r}(\mathbb{R}^{N}));$$
$$\frac{\rho^{\tau}\mathbf{v}^{\tau}}{\tau}\ \ \ \mbox{is uniformly bounded in}\ \ L^2(\mathbb{R}^{+},B^{\sigma}_{2,r}(\mathbb{R}^{N})).$$
Furthermore, there exists some function $\mathcal{N}\in
\mathcal{C}(\mathbb{R}^{+},
\bar{n}+B^{\sigma}_{2,r}(\mathbb{R}^{N}))$ which is a unique
solution of (\ref{R-E6}). For any $0<T,R<\infty$,
$\{\rho^{\tau}(s,x)\}$ strongly converges to $\mathcal{N}(s,x)$ in
 $\mathcal{C}([0,T],
(B^{\sigma-\delta}_{2,r}(B_{r}))$ as  $\tau\rightarrow0$, where
$\delta\in(0,1)$ and $B_{r}$ denotes the ball of radius $r$ in
$\mathbb{R}^{N}$. In addition, it holds that
\begin{eqnarray}
\|(\mathcal{N}(s,\cdot)-\bar{\rho}\|_{B^{\sigma}_{2,r}(\mathbb{R}^{N})}\leq
C_{0}\|(\rho_{0}-\bar{\rho},
\mathbf{m}_{0})\|_{B^{\sigma}_{2,r}(\mathbb{R}^{N})},\ s\geq0,
\label{R-E1000}
\end{eqnarray}
where $C_{0}>0$ is a uniform constant independent of $\tau$.
\end{thm}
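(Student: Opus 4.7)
The plan is to derive Theorem \ref{thm1.2} from the $\tau$-uniform energy estimate \eqref{R-E7} of Theorem \ref{thm1.1} by a compactness and weak-convergence argument. First I would recast \eqref{R-E7} in the scaled variables $(\rho^\tau,\mathbf{m}^\tau)$. The time change $s=\tau t$ implicit in \eqref{R-E3} preserves $\|\rho-\bar{\rho}\|_{\widetilde{L}^\infty_t(B^\sigma_{2,r})}$, while a short computation gives the identities $\|\mathbf{m}/\sqrt{\tau}\|_{\widetilde{L}^2_t(B^\sigma_{2,r})}=\|\mathbf{m}^\tau/\tau\|_{\widetilde{L}^2_s(B^\sigma_{2,r})}$ and $\|\sqrt{\tau}\,\nabla(\rho,\mathbf{m})\|_{\widetilde{L}^2_t(B^{\sigma-1}_{2,r})}=\|\nabla(\rho^\tau,\mathbf{m}^\tau)\|_{\widetilde{L}^2_s(B^{\sigma-1}_{2,r})}$. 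Thus \eqref{R-E7} furnishes at once the two uniform-in-$\tau$ bounds asserted in the statement, and as a by-product uniform control of $\nabla(\rho^\tau,\mathbf{m}^\tau)$ in $L^2(\mathbb{R}^+,B^{\sigma-1}_{2,r})$.

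Next I would extract strong local compactness of $\{\rho^\tau\}$ via Aubin--Lions. The first equation of \eqref{R-E4}, namely $\partial_s\rho^\tau=-\nabla\!\cdot\!(\mathbf{m}^\tau/\tau)$, transports the $L^2(\mathbb{R}^+,B^\sigma_{2,r})$ bound on $\mathbf{m}^\tau/\tau$ into a uniform $L^2(\mathbb{R}^+,B^{\sigma-1}_{2,r})$ bound on $\partial_s\rho^\tau$. Since on any ball $B_R\subset\mathbb{R}^N$ the embedding $B^\sigma_{2,r}(B_R)\hookrightarrow B^{\sigma-\delta}_{2,r}(B_R)$ is compact for $\delta\in(0,1)$, the Aubin--Lions lemma of \cite{S} yields relative compactness of $\{\rho^\tau-\bar{\rho}\}$ in $\mathcal{C}([0,T],B^{\sigma-\delta}_{2,r}(B_R))$ for every $T,R<\infty$. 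A diagonal extraction then selects a subsequence $\tau_k\to 0$ and a limit $\mathcal{N}\in\bar{\rho}+L^\infty(\mathbb{R}^+,B^\sigma_{2,r})$ such that $\rho^{\tau_k}\to\mathcal{N}$ locally in this sense.

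To identify $\mathcal{N}$ I would rewrite the momentum equation of \eqref{R-E4} as
\[
\frac{\mathbf{m}^\tau}{\tau}=-\nabla p(\rho^\tau)-\tau^2\partial_s\!\Bigl(\frac{\mathbf{m}^\tau}{\tau}\Bigr)-\tau^2\nabla\!\cdot\!\Bigl(\frac{1}{\rho^\tau}\,\frac{\mathbf{m}^\tau}{\tau}\otimes\frac{\mathbf{m}^\tau}{\tau}\Bigr)
\]
and test against a compactly supported smooth vector field. Integration by parts in $s$, together with the $L^2(B^\sigma_{2,r})$-bound on $\mathbf{m}^\tau/\tau$ and the $\tau^2$ prefactor, makes the first corrector vanish in $\mathcal{D}'$. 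For the quadratic flux the product estimate in $B^\sigma_{2,r}$ and the same $L^2$-bound keep $(\mathbf{m}^\tau/\tau)\otimes(\mathbf{m}^\tau/\tau)/\rho^\tau$ bounded in $L^1(\mathbb{R}^+,B^\sigma_{2,r})$, so the $\tau^2$ factor sends it to zero as well. Combined with the strong convergence $p(\rho^{\tau_k})\to p(\mathcal{N})$ delivered by the previous step, one gets $\mathbf{m}^{\tau_k}/\tau_k\rightharpoonup -\nabla p(\mathcal{N})$ in $\mathcal{D}'$; inserting this into the limit of the mass equation produces the porous medium equation $\partial_s\mathcal{N}-\Delta p(\mathcal{N})=0$ with $\mathcal{N}(0,\cdot)=\rho_0$.

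The delicate step will be the quadratic flux: a direct bound on $\mathbf{m}^\tau\otimes\mathbf{m}^\tau/\tau^2$ is not $\tau$-uniform, and only the normalisation by $\mathbf{m}^\tau/\tau$ combined with the uniform $L^2$-in-time energy of Theorem \ref{thm1.1} produces decay. Uniqueness of the solution of \eqref{R-E6} in the class $\bar{\rho}+\mathcal{C}(\mathbb{R}^+,B^\sigma_{2,r})$ (the limiting equation is uniformly parabolic near $\bar{\rho}$) then promotes the subsequential convergence to convergence of the whole family $\{\rho^\tau\}$. Finally, \eqref{R-E1000} is inherited from \eqref{R-E7} by weak-$*$ lower semicontinuity of the $B^\sigma_{2,r}$-norm along $\rho^{\tau_k}-\bar{\rho}\rightharpoonup\mathcal{N}-\bar{\rho}$.
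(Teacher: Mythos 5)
Your proposal is correct and follows exactly the route the paper itself takes (by deferring to the argument of \cite{XW}): the scaling identities converting the $\tau$-uniform estimate (\ref{R-E7}) into the two stated bounds, Aubin--Lions compactness from \cite{S} on balls using the mass equation to control $\partial_s\rho^\tau$ in $L^2(B^{\sigma-1}_{2,r})$, passage to the limit in the rescaled momentum equation where the $\tau^2$-correctors vanish, identification of the limit with the porous medium equation, and uniqueness plus lower semicontinuity for the full-family convergence and (\ref{R-E1000}). No substantive deviation from the paper's (sketched) proof.
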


\begin{rem}
Theorem \ref{thm1.2} gives a rigorous description in \textit{larger}
spaces that the porous medium equation is usually regarded as an
appropriate model for compressible inviscid fluids in small
amplitude regime of relaxation time $\tau$. In comparison with our
recent results in \cite{XW}, Theorem \ref{thm1.1}-\ref{thm1.2} also
hold in the cases of \textit{general} pressure and
\textit{arbitrary} space dimensions except for the regularity
consideration.
\end{rem}

\begin{rem}
From the symmetrization in Sect.\ref{sec:3}, we see that the
dependence of the matrices $A^{j}\ (j=0,1,2\cdot\cdot\cdot,N)$ with
respect to the total variable $W$ rather than only $W_{2}$. The
concrete context of matrices enables us to obtain the uniform
frequency-localization estimates and the relaxation limit. However,
to the best of our knowledge, it is unknown for generally hyperbolic
systems to get corresponding results. Therefore, this paper can be
regarded as the effort to the open question in \cite{BZ} (Remark 15,
p.225).
\end{rem}

The rest of this paper unfolds as follows. In Sect.~\ref{sec:2}, we
briefly review the Littlewood-Paley decomposition and properties of
Besov spaces and Chemin-Lerner spaces. In Sect.~\ref{sec:3}, we
reformulate the equations (\ref{R-E1}) as a symmetric hyperbolic
form in terms of entropy variables, and the local existence and
blow-up criterion of classical solutions in critical spaces are
presented. Sect.~\ref{sec:4} is devoted to deduce the a priori
estimates in Chemin-Lerner spaces by using Fourier-localization
arguments, which are used to achieve the global existence of
classical solutions.

\section{Preliminary}\label{sec:2}
\setcounter{equation}{0} Throughout the paper, $f\lesssim g$ denotes
$f\leq Cg$, where $C>0$ is a generic constant. $f\thickapprox g$
means $f\lesssim g$ and $g\lesssim f$. Denote by
$\mathcal{C}([0,T],X)$ (resp., $\mathcal{C}^{1}([0,T],X)$) the space
of continuous (resp., continuously differentiable) functions on
$[0,T]$ with values in a Banach space $X$.  Also, $\|(f,g,h)\|_{X}$
means $ \|f\|_{X}+\|g\|_{X}$, where $f,g\in X$. $\langle f,g\rangle$
denotes the inner product of two functions $f,g$ in
$L^2(\mathbb{R}^{N})$.

In this section, we briefly review the Littlewood-Paley
decomposition and some properties of Besov spaces. The reader is
also referred to, \textit{e.g.}, \cite{BCD} for more details. We
omit the space dependence, since all functional spaces (in $x$) are
considered in $\mathbb{R}^{N}$.

Let us start with the Fourier transform. The Fourier transform
$\hat{f}$ of a $L^1$-function $f$ is given by
$$\mathcal{F}f=\int_{\mathbb{R}^{N}}f(x)e^{-2\pi x\cdot\xi}dx.$$ More
generally, the Fourier transform of any $f\in\mathcal{S}'$, the
space of tempered distributions, is given by
$$(\mathcal{F}f,g)=(f,\mathcal{F}g)$$ for any $g\in \mathcal{S}$, the Schwartz
class.

First, we fix some notation.
$$\mathcal{S}_{0}=\Big\{\phi\in\mathcal{S},\partial^{\alpha}\mathcal{F}f(0)=0, \forall \alpha \in \mathbb{N}^{N}\ \mbox{multi-index}\Big\}.$$
Its dual is given by
$$\mathcal{S}'_{0}=\mathcal{S}'/\mathcal{P},$$ where $\mathcal{P}$
is the space of polynomials.

We now introduce a dyadic partition of $\mathbb{R}^{N}$. We choose
$\phi_{0}\in \mathcal{S}$ such that $\phi_{0}$ is even,
$$\mathrm{supp}\phi_{0}:=A_{0}=\Big\{\xi\in\mathbb{R}^{N}:\frac{3}{4}\leq|\xi|\leq\frac{8}{3}\Big\},\  \mbox{and}\ \ \phi_{0}>0\ \ \mbox{on}\ \ A_{0}.$$
Set $A_{q}=2^{q}A_{0}$ for $q\in\mathbb{Z}$. Furthermore, we define
$$\phi_{q}(\xi)=\phi_{0}(2^{-q}\xi)$$ and define $\Phi_{q}\in
\mathcal{S}$ by
$$\mathcal{F}\Phi_{q}(\xi)=\frac{\phi_{q}(\xi)}{\sum_{q\in \mathbb{Z}}\phi_{q}(\xi)}.$$
It follows that both $\mathcal{F}\Phi_{q}(\xi)$ and $\Phi_{q}$ are
even and satisfy the following properties:
$$\mathcal{F}\Phi_{q}(\xi)=\mathcal{F}\Phi_{0}(2^{-q}\xi),\ \ \ \mathrm{supp}\ \mathcal{F}\Phi_{q}(\xi)\subset A_{q},\ \ \ \Phi_{q}(x)=2^{qN}\Phi_{0}(2^{q}x)$$
and
$$\sum_{q=-\infty}^{\infty}\mathcal{F}\Phi_{q}(\xi)=\cases{1,\ \ \ \mbox{if}\ \ \xi\in\mathbb{R}^{N}\setminus \{0\},
\cr 0, \ \ \ \mbox{if}\ \ \xi=0.}
$$
As a consequence, for any $f\in S'_{0},$ we have
$$\sum_{q=-\infty}^{\infty}\Phi_{q}\ast f=f.$$

To define the homogeneous Besov spaces, we set
$$\dot{\Delta}_{q}f=\Phi_{q}\ast f,\ \ \ \ q=0,\pm1,\pm2,...$$

\begin{defn}\label{defn2.1}
For $s\in \mathbb{R}$ and $1\leq p,r\leq\infty,$ the homogeneous
Besov spaces $\dot{B}^{s}_{p,r}$ is defined by
$$\dot{B}^{s}_{p,r}=\{f\in S'_{0}:\|f\|_{\dot{B}^{s}_{p,r}}<\infty\},$$
where
$$\|f\|_{\dot{B}^{s}_{p,r}}
=\cases{\Big(\sum_{q\in\mathbb{Z}}(2^{qs}\|\dot{\Delta}_{q}f\|_{L^p})^{r}\Big)^{1/r},\
\ r<\infty, \cr \sup_{q\in\mathbb{Z}}
2^{qs}\|\dot{\Delta}_{q}f\|_{L^p},\ \ r=\infty.} $$\end{defn}

To define the inhomogeneous Besov spaces, we set $\Psi\in
\mathcal{C}_{0}^{\infty}(\mathbb{R}^{N})$ be even and satisfy
$$\mathcal{F}\Psi(\xi)=1-\sum_{q=0}^{\infty}\mathcal{F}\Phi_{q}(\xi).$$
It is clear that for any $f\in S'$, yields
$$\Psi*f+\sum_{q=0}^{\infty}\Phi_{q}\ast f=f.$$
We further set
$$\Delta_{q}f=\cases{0,\ \ \ \ \ \ \ \, \ q\leq-2,\cr
\Psi*f,\ \ \ q=-1,\cr \Phi_{q}\ast f, \ \ q=0,1,2,...}$$

\begin{defn}\label{defn2.2}
For $s\in \mathbb{R}$ and $1\leq p,r\leq\infty,$ the inhomogeneous
Besov spaces $B^{s}_{p,r}$ is defined by
$$B^{s}_{p,r}=\{f\in S':\|f\|_{B^{s}_{p,r}}<\infty\},$$
where
$$\|f\|_{B^{s}_{p,r}}
=\cases{\Big(\sum_{q=-1}^{\infty}(2^{qs}\|\Delta_{q}f\|_{L^p})^{r}\Big)^{1/r},\
\ r<\infty, \cr \sup_{q\geq-1} 2^{qs}\|\Delta_{q}f\|_{L^p},\ \
r=\infty.}$$
\end{defn}
Next we turn to Bernstein inequalities.
\begin{lem}\label{lem2.1}
Let $k\in\mathbb{N}$ and $0<R_{1}<R_{2}$. There exists a constant
$C$, depending only on $R_{1},R_{2}$ and $N$, such that for all
$1\leq a\leq b\leq\infty$ and $f\in L^{a}$,
$$
\mathrm{Supp}\mathcal{F}f\subset \{\xi\in \mathbb{R}^{N}: |\xi|\leq
R_{1}\lambda\}\Rightarrow\sup_{|\alpha|=k}\|\partial^{\alpha}f\|_{L^{b}}
\leq C^{k+1}\lambda^{k+d(\frac{1}{a}-\frac{1}{b})}\|f\|_{L^{a}};
$$
$$
\mathrm{Supp}\mathcal{F}f\subset \{\xi\in \mathbb{R}^{N}:
R_{1}\lambda\leq|\xi|\leq R_{2}\lambda\} \Rightarrow
C^{-k-1}\lambda^{k}\|f\|_{L^{a}}\leq
\sup_{|\alpha|=k}\|\partial^{\alpha}f\|_{L^{a}}\leq
C^{k+1}\lambda^{k}\|f\|_{L^{a}}.
$$
\end{lem}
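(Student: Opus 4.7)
The plan is to establish both estimates by representing $f$ (or its derivatives) as a convolution with an appropriately scaled Schwartz kernel, and then applying Young's convolution inequality, exploiting the fact that differentiation preserves the frequency localization.

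For the first estimate, I would fix an auxiliary $\chi\in\mathcal{C}_{c}^{\infty}(\mathbb{R}^{N})$ which is even, identically $1$ on the ball $\{|\xi|\le R_{1}\}$ and supported in, say, $\{|\xi|\le 2R_{1}\}$. The assumption $\mathrm{Supp}\,\mathcal{F}f\subset\{|\xi|\le R_{1}\lambda\}$ gives the reproducing identity $\widehat{f}(\xi)=\chi(\xi/\lambda)\widehat{f}(\xi)$, hence $f=h_{\lambda}\ast f$ with $h:=\mathcal{F}^{-1}\chi$ and $h_{\lambda}(x):=\lambda^{N}h(\lambda x)$. Differentiating under the convolution yields $\partial^{\alpha}f=(\partial^{\alpha}h_{\lambda})\ast f$, and Young's inequality with $\tfrac{1}{c}=1-\tfrac{1}{a}+\tfrac{1}{b}$ gives
$$\|\partial^{\alpha}f\|_{L^{b}}\le\|\partial^{\alpha}h_{\lambda}\|_{L^{c}}\|f\|_{L^{a}}=\lambda^{|\alpha|+N(1/a-1/b)}\|\partial^{\alpha}h\|_{L^{c}}\|f\|_{L^{a}},$$
since a direct change of variables yields $\|\partial^{\alpha}h_{\lambda}\|_{L^{c}}=\lambda^{|\alpha|+N(1-1/c)}\|\partial^{\alpha}h\|_{L^{c}}$. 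To produce the desired geometric constant $C^{k+1}$ rather than an $\alpha$-dependent bound, I would iterate the one-derivative case: since $\partial_{i}f$ has Fourier support in the same ball, applying the $|\alpha|=1$ estimate $k$ times with $a=b$ reduces derivatives of order $k$ to $f$ itself at the cost of a factor $(C\lambda)^{k}$, and a final one-step application handles the change of exponent from $L^{a}$ to $L^{b}$.

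For the second inequality the upper bound is immediate from the first (applied with $a=b$ and $R_{1}\lambda$ replaced by $R_{2}\lambda$). The lower bound requires inverting the Fourier multiplier. I would choose a second cutoff $\tilde\chi\in\mathcal{C}_{c}^{\infty}$ supported in $\{R_{1}/2\le|\xi|\le 2R_{2}\}$ and equal to $1$ on the annulus $\{R_{1}\le|\xi|\le R_{2}\}$; then $\eta_{\alpha}(\xi):=\tilde\chi(\xi)\,\overline{(2\pi i\xi)^{\alpha}}\big/|2\pi\xi|^{2k}$ is a fixed Schwartz function for each $|\alpha|=k$, and summing over such $\alpha$ recovers $\tilde\chi(\xi)$. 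Using the reproducing identity $\widehat{f}=\tilde\chi(\cdot/\lambda)\widehat{f}$ together with this decomposition yields a representation
$$f=\lambda^{-k}\sum_{|\alpha|=k}g_{\alpha,\lambda}\ast\partial^{\alpha}f,$$
where $g_{\alpha}:=\mathcal{F}^{-1}\eta_{\alpha}$ is a fixed Schwartz function and $g_{\alpha,\lambda}(x)=\lambda^{N}g_{\alpha}(\lambda x)$. Young's inequality then gives $\|f\|_{L^{a}}\le C\,\lambda^{-k}\sup_{|\alpha|=k}\|\partial^{\alpha}f\|_{L^{a}}$, which is the required lower bound.

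The routine difficulty is bookkeeping the constant so that the final form is $C^{k+1}$ with $C$ depending only on $R_{1},R_{2},N$. This is the only delicate point, and it is handled by always reducing the argument to the $k=1$ case (where the constant is purely geometric) and then iterating, using that each differentiation preserves the frequency support and that the number of multi-indices of order $k$ is at most $N^{k}$, which is absorbed into $C^{k+1}$.
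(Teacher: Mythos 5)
Your argument is correct and is the standard proof of the Bernstein inequalities (the paper states Lemma \ref{lem2.1} without proof, deferring to the cited literature, and the textbook proof there is exactly your convolution-plus-Young scheme with iteration on the one-derivative case to control the constant). The only point to tidy up is in the lower bound: with $\eta_{\alpha}(\xi)=\tilde\chi(\xi)\,\overline{(2\pi i\xi)^{\alpha}}/|2\pi\xi|^{2k}$ the sum $\sum_{|\alpha|=k}\eta_{\alpha}(\xi)(2\pi i\xi)^{\alpha}$ equals $\tilde\chi(\xi)\sum_{|\alpha|=k}(2\pi\xi)^{2\alpha}/|2\pi\xi|^{2k}$, which is not identically $\tilde\chi(\xi)$; you should insert the multinomial coefficients $k!/\alpha!$ in the definition of $\eta_{\alpha}$ (so that the multinomial expansion of $|2\pi\xi|^{2k}$ is reproduced exactly), or else argue that the resulting multiplier is bounded above and below on the annulus. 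Either fix is routine and the coefficients are at most $N^{k}$, hence absorbed into $C^{k+1}$ as you note.
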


As a direct corollary of the above inequalities, it holds that
\begin{rem}\label{rem2.1}
$$\frac{1}{C}\|f\|_{\dot{B}^{s + |\alpha|}_{p,
r}}\leq\|\partial^\alpha f\|_{\dot{B}^s_{p, r}}\leq
C\|f\|_{\dot{B}^{s + |\alpha|}_{p, r}};$$
$$
\|\partial^\alpha f\|_{B^s_{p, r}}\leq C\|f\|_{B^{s + |\alpha|}_{p,
r}},
$$ for all
multi-index $\alpha$.
\end{rem}

The Besov spaces defined above obey various inclusion relations. In
particular, we have
\begin{lem}\label{lem2.2} Let $s\in \mathbb{R}$ and $1\leq
p,r\leq\infty,$ then
\begin{itemize}
\item[(1)]If $s>0$, then $B^{s}_{p,r}=L^{p}\cap \dot{B}^{s}_{p,r};$
\item[(2)]If $\tilde{s}\leq s$, then $B^{s}_{p,r}\hookrightarrow
B^{\tilde{s}}_{p,\tilde{r}}$;
\item[(3)]$\dot{B}^{N/p}_{p,1}\hookrightarrow\mathcal{C}_{0},\ \ B^{N/p}_{p,1}\hookrightarrow\mathcal{C}_{0}(1\leq p<\infty);$
\end{itemize}
where $\mathcal{C}_{0}$ is the space of continuous bounded functions
which decay at infinity.
\end{lem}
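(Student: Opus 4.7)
The plan is to prove each of the three assertions separately by direct manipulation of the Littlewood--Paley decompositions of Definitions \ref{defn2.1} and \ref{defn2.2}, using Bernstein-type estimates and elementary convergence arguments.

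For (1), I would show the two inclusions separately. To establish $B^{s}_{p,r}\hookrightarrow L^{p}\cap \dot B^{s}_{p,r}$, the $L^{p}$ bound is obtained by writing $f=\Delta_{-1}f+\sum_{q\geq 0}\Delta_{q}f$: Young's inequality controls $\Delta_{-1}f=\Psi\ast f$, while for the high-frequency sum a H\"{o}lder argument in the discrete variable gives $\sum_{q\geq 0}\|\Delta_{q}f\|_{L^{p}}\lesssim \big(\sum_{q\geq 0}2^{-qsr'}\big)^{1/r'}\|f\|_{B^{s}_{p,r}}$, which is finite precisely because $s>0$. The homogeneous norm then reduces to handling $q\leq -1$: by Young's inequality, $\|\dot\Delta_{q}f\|_{L^{p}}\leq C\|f\|_{L^{p}}$, and $\sum_{q\leq -1}2^{qsr}<\infty$ closes the estimate. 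The reverse inclusion $L^{p}\cap \dot B^{s}_{p,r}\hookrightarrow B^{s}_{p,r}$ is immediate since $\Delta_{q}f=\dot\Delta_{q}f$ for $q\geq 0$, while $\|\Delta_{-1}f\|_{L^{p}}\leq C\|f\|_{L^{p}}$ handles the low frequency.

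For (2), I would treat the two ranges $\tilde s<s$ and $\tilde s=s$ separately. In the first case, factor $2^{q\tilde s}=2^{-q(s-\tilde s)}\cdot 2^{qs}$ and apply H\"{o}lder in $q$ with geometric weight $2^{-q(s-\tilde s)}$; summability comes from $s-\tilde s>0$. In the equality case, the nesting $\ell^{r}\hookrightarrow \ell^{\tilde r}$ (implicitly requiring $r\leq \tilde r$, as is standard) applied to the dyadic sequence $\{2^{qs}\|\Delta_{q}f\|_{L^{p}}\}_{q\geq -1}$ gives the result. For (3), Bernstein's inequality from Lemma \ref{lem2.1} (taking $a=p$, $b=\infty$, $k=0$) yields $\|\dot\Delta_{q}f\|_{L^{\infty}}\leq C\,2^{qN/p}\|\dot\Delta_{q}f\|_{L^{p}}$, so that
\[
\sum_{q\in\mathbb{Z}}\|\dot\Delta_{q}f\|_{L^{\infty}}\leq C\|f\|_{\dot B^{N/p}_{p,1}}.
\]
The partial sums $\sum_{|q|\leq M}\dot\Delta_{q}f$ each lie in $\mathcal{C}_{0}$, being finite linear combinations of convolutions between an $L^{p}$ representative and the Schwartz functions $\Phi_{q}$; the absolute summability above upgrades this to uniform convergence, and $\mathcal{C}_{0}$ is closed under uniform limits. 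The inhomogeneous case is analogous, with $\Delta_{-1}f=\Psi\ast f\in\mathcal{C}_{0}$ when $1\leq p<\infty$.

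The main delicate point will be in (3): Bernstein alone only delivers $L^{\infty}$ membership, so one must carefully argue that each dyadic block, and hence the uniform limit, actually decays at infinity. This is where the restriction $1\leq p<\infty$ in the inhomogeneous statement enters, since $\Psi\ast f\in\mathcal{C}_{0}$ uses that $L^{p}$ functions (for $p<\infty$) convolved with Schwartz kernels vanish at infinity. Parts (1) and (2) are comparatively routine book-keeping of dyadic norms.
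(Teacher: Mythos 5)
The paper states Lemma \ref{lem2.2} without proof, as a standard preliminary quoted from the literature (cf.\ \cite{BCD}), so there is no in-paper argument to compare against; your proof follows the standard textbook route and is correct. Your two cautionary remarks are also the right ones: the embedding in (2) genuinely requires $r\leq\tilde r$ in the endpoint case $\tilde s=s$ (the printed statement omits this), and in (3) the decay at infinity of each dyadic block --- obtained by writing $\dot{\Delta}_{q}f$ as a Schwartz kernel convolved with the $L^{p}$ function $\dot{\Delta}_{q}f$ itself, $p<\infty$ --- is exactly what upgrades the Bernstein bound from mere $L^{\infty}$ membership to membership in $\mathcal{C}_{0}$.
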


On the other hand, we also present the definition of Chemin-Lerner
space-time spaces initialed by J.-Y. Chemin and N. Lerner \cite{C2},
which are the refinement of the spaces
$L^{\theta}_{T}(\dot{B}^{s}_{p,r})$ or
$L^{\theta}_{T}(B^{s}_{p,r})$.

\begin{defn}\label{defn2.3}
For $T>0, s\in\mathbb{R}, 1\leq r,\theta\leq\infty$, the homogeneous
mixed time-space Besov spaces
$\widetilde{L}^{\theta}_{T}(\dot{B}^{s}_{p,r})$ is defined by
$$\widetilde{L}^{\theta}_{T}(\dot{B}^{s}_{p,r}):
=\{f\in
L^{\theta}(0,T;\mathcal{S}'_{0}):\|f\|_{\widetilde{L}^{\theta}_{T}(\dot{B}^{s}_{p,r})}<+\infty\},$$
where
$$\|f\|_{\widetilde{L}^{\theta}_{T}(\dot{B}^{s}_{p,r})}:=\Big(\sum_{q\in\mathbb{Z}}(2^{qs}\|\dot{\Delta}_{q}f\|_{L^{\theta}_{T}(L^{p})})^{r}\Big)^{\frac{1}{r}}$$
with the usual convention if $r=\infty$.
\end{defn}

\begin{defn}\label{defn2.4}
For $T>0, s\in\mathbb{R}, 1\leq r,\theta\leq\infty$, the
inhomogeneous mixed time-space Besov spaces
$\widetilde{L}^{\theta}_{T}(B^{s}_{p,r})$ is defined by
$$\widetilde{L}^{\theta}_{T}(B^{s}_{p,r}):
=\{f\in
L^{\theta}(0,T;\mathcal{S}'):\|f\|_{\widetilde{L}^{\theta}_{T}(B^{s}_{p,r})}<+\infty\},$$
where
$$\|f\|_{\widetilde{L}^{\theta}_{T}(B^{s}_{p,r})}:=\Big(\sum_{q\geq-1}(2^{qs}\|\Delta_{q}f\|_{L^{\theta}_{T}(L^{p})})^{r}\Big)^{\frac{1}{r}}$$
with the usual convention if $r=\infty$.
\end{defn}

We only state some basic properties on the inhomogeneous
Chemin-Lerner spaces, since the similar ones follow in the
homogeneous Chemin-Lerner spaces.

The first one is that $\widetilde{L}^{\theta}_{T}(B^{s}_{p,r})$ may
be linked with the classical spaces $L^{\theta}_{T}(B^{s}_{p,r})$
via the Minkowski's inequality:
\begin{rem}\label{rem2.2}
It holds that
$$\|f\|_{\widetilde{L}^{\theta}_{T}(B^{s}_{p,r})}\leq\|f\|_{L^{\theta}_{T}(B^{s}_{p,r})}\,\,\,
\mbox{if}\,\, r\geq\theta;\ \ \ \
\|f\|_{\widetilde{L}^{\theta}_{T}(B^{s}_{p,r})}\geq\|f\|_{L^{\theta}_{T}(B^{s}_{p,r})}\,\,\,
\mbox{if}\,\, r\leq\theta.
$$\end{rem}
Let us also recall the property of continuity for product in
Chemin-Lerner spaces $\widetilde{L}^{\theta}_{T}(B^{s}_{p,r})$.
\begin{prop}\label{prop2.1}
The following inequality holds:
$$
\|fg\|_{\widetilde{L}^{\theta}_{T}(B^{s}_{p,r})}\leq
C(\|f\|_{L^{\theta_{1}}_{T}(L^{\infty})}\|g\|_{\widetilde{L}^{\theta_{2}}_{T}(B^{s}_{p,r})}
+\|g\|_{L^{\theta_{3}}_{T}(L^{\infty})}\|f\|_{\widetilde{L}^{\theta_{4}}_{T}(B^{s}_{p,r})})
$$
whenever $s>0, 1\leq p\leq\infty,
1\leq\theta,\theta_{1},\theta_{2},\theta_{3},\theta_{4}\leq\infty$
and
$$\frac{1}{\theta}=\frac{1}{\theta_{1}}+\frac{1}{\theta_{2}}=\frac{1}{\theta_{3}}+\frac{1}{\theta_{4}}.$$
As a direct corollary, one has
$$\|fg\|_{\widetilde{L}^{\theta}_{T}(B^{s}_{p,r})}
\leq
C\|f\|_{\widetilde{L}^{\theta_{1}}_{T}(B^{s}_{p,r})}\|g\|_{\widetilde{L}^{\theta_{2}}_{T}(B^{s}_{p,r})}$$
whenever $s\geq d/p,
\frac{1}{\theta}=\frac{1}{\theta_{1}}+\frac{1}{\theta_{2}}.$
\end{prop}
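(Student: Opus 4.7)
The plan is to establish Proposition \ref{prop2.1} via Bony's paraproduct decomposition
$$fg = T_f g + T_g f + R(f,g),$$
where $T_f g := \sum_{q'} S_{q'-1} f \,\Delta_{q'} g$ is the paraproduct, $R(f,g) := \sum_{q'} \Delta_{q'} f \,\widetilde{\Delta}_{q'} g$ is the remainder with $\widetilde{\Delta}_{q'} := \Delta_{q'-1} + \Delta_{q'} + \Delta_{q'+1}$, and $S_{q'-1} := \sum_{k\leq q'-2} \Delta_k$ denotes the usual low-frequency cut-off. The proof then reduces to bounding each of the three pieces in $\widetilde{L}^\theta_T(B^s_{p,r})$: apply $\Delta_q$, take the $L^p$ then $L^\theta_T$ norm, multiply by $2^{qs}$, and sum over $q$ in $\ell^r$.

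For the paraproduct $T_f g$, spectral support considerations show that $S_{q'-1} f\,\Delta_{q'} g$ is Fourier-supported in an annulus of radius $\sim 2^{q'}$, so only $q'$ with $|q-q'|\leq N_0$ contribute to $\Delta_q(T_f g)$. Combining $\|S_{q'-1} f\|_{L^\infty} \leq C\|f\|_{L^\infty}$ with H\"older in time ($1/\theta = 1/\theta_1 + 1/\theta_2$) and a routine finite-sum argument on the dyadic blocks yields the contribution $C\|f\|_{L^{\theta_1}_T(L^\infty)}\|g\|_{\widetilde{L}^{\theta_2}_T(B^s_{p,r})}$. The term $T_g f$ is treated symmetrically with exponents $(\theta_3,\theta_4)$, which explains why the statement allows two distinct factorizations.

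The remainder $R(f,g)$ is the key piece and the one that forces $s>0$. Its summands $\Delta_{q'} f\,\widetilde{\Delta}_{q'} g$ have Fourier support in a ball of radius $\lesssim 2^{q'}$, so only $q'\geq q - N_0$ contribute to $\Delta_q R(f,g)$. Estimating one factor in $L^\infty$, the other in $L^p$, then applying H\"older in time, leads to
$$2^{qs}\|\Delta_q R(f,g)\|_{L^\theta_T(L^p)} \leq C\|f\|_{L^{\theta_1}_T(L^\infty)} \sum_{q'\geq q - N_0} 2^{(q-q')s}\bigl(2^{q's}\|\widetilde{\Delta}_{q'} g\|_{L^{\theta_2}_T(L^p)}\bigr).$$
Since $s>0$, the sequence $\{2^{ks}\mathbf{1}_{k\leq N_0}\}_{k\in\mathbb{Z}}$ lies in $\ell^1$, so Young's convolution inequality for series closes the bound. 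The corollary then follows by invoking the embedding $B^s_{p,r} \hookrightarrow L^\infty$ (valid for $s>d/p$, or $s=d/p$ with $r=1$; cf.\ Lemma \ref{lem2.2}(3)) in its time-integrated form $\widetilde{L}^{\theta_j}_T(B^s_{p,r}) \hookrightarrow L^{\theta_j}_T(L^\infty)$, and plugging the resulting $L^\infty$-in-time estimates into the first inequality.

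The main obstacle is the remainder analysis: one must combine Bernstein-type spectral localization, H\"older in time, and Young's inequality in the dyadic frequency index simultaneously, while honoring the possibility of two different H\"older factorizations. The paraproducts require only a finite local sum around each frequency, so the genuine analytic content, together with the sharpness of the hypothesis $s>0$, lives entirely in the convergence of the geometric weights $2^{(q-q')s}$ summed over $q' \geq q-N_0$ in the remainder.
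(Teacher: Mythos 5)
Your argument is correct. The paper states Proposition \ref{prop2.1} without proof, recalling it as a standard property of Chemin--Lerner spaces from the cited references, and your Bony-decomposition argument (finite local sums for the two paraproducts, Young's convolution inequality in the dyadic index with the $\ell^1$ weight $2^{ks}\mathbf{1}_{k\leq N_0}$ for the remainder, which is where $s>0$ enters) is exactly the standard proof those references give. Your parenthetical on the corollary is also a fair catch: the embedding $B^{s}_{p,r}\hookrightarrow L^{\infty}$ at the endpoint $s=d/p$ requires $r=1$, so the corollary's hypothesis ``$s\geq d/p$'' is slightly loose as written, though every application in the paper falls under condition (\ref{R-E666}) and is therefore covered.
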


In the next symmetrization, we meet with some composition functions.
The following continuity result for compositions is used to estimate
them.
\begin{prop}\label{prop2.2}
Let $s>0$, $1\leq p, r, \theta\leq \infty$, $F'\in
W^{[s]+1,\infty}_{loc}(I;\mathbb{R})$ with $F(0)=0$, $T\in
(0,\infty]$ and $v\in \widetilde{L}^{\theta}_{T}(B^{s}_{p,r})\cap
L^{\infty}_{T}(L^{\infty}).$ Then
$$\|F(f)\|_{\widetilde{L}^{\theta}_{T}(B^{s}_{p,r})}\leq
C(1+\|f\|_{L^{\infty}_{T}(L^{\infty})})^{[s]+1}\|F'\|_{W^{[s]+1,\infty}}\|f\|_{\widetilde{L}^{\theta}_{T}(B^{s}_{p,r})}.$$
\end{prop}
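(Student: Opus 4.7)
The plan is to adapt the classical Bahouri--Chemin--Danchin composition estimate in Besov spaces to the Chemin--Lerner time-space setting. The key tool is the telescoping decomposition, valid since $F(0)=0$ and $S_{q}f\to 0$ as $q\to-\infty$:
\begin{equation*}
F(f)=\sum_{q\geq-1}\bigl(F(S_{q+1}f)-F(S_{q}f)\bigr)=\sum_{q\geq-1}m_{q}\,\Delta_{q}f,
\end{equation*}
with $m_{q}:=\int_{0}^{1}F'(S_{q}f+t\Delta_{q}f)\,dt$ and $S_{q}:=\sum_{j\leq q-1}\Delta_{j}$. Since $m_{q}\Delta_{q}f$ is spectrally supported in a ball of radius $\lesssim 2^{q}$, a standard Fourier-support argument gives $\Delta_{j}(m_{q}\Delta_{q}f)=0$ for $q<j-N_{0}$ with $N_{0}$ a fixed integer, so that only the tail $q\geq j-N_{0}$ contributes to $\Delta_{j}F(f)$.

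Next I would control the derivatives of $m_{q}$. The Fa\`a di Bruno formula combined with Bernstein's inequality applied to $S_{q}f$ and $\Delta_{q}f$ yields, for every integer $0\leq k\leq[s]+1$ and every $t\in[0,T]$,
\begin{equation*}
\|\nabla^{k}m_{q}(t,\cdot)\|_{L^{\infty}}\lesssim 2^{kq}\bigl(1+\|f(t,\cdot)\|_{L^{\infty}}\bigr)^{k}\|F'\|_{W^{k,\infty}},
\end{equation*}
since each derivative landing on the argument of $F'$ produces a factor bounded by $\|\nabla S_{q}f\|_{L^{\infty}}+\|\nabla\Delta_{q}f\|_{L^{\infty}}\lesssim 2^{q}\|f\|_{L^{\infty}}$, while the $L^{\infty}$-norm of the argument itself is controlled by $\|f\|_{L^{\infty}}$. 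Transferring $[s]+1$ derivatives from $\Delta_{j}$ onto $m_{q}\Delta_{q}f$ via the fast-decaying kernel of $\Delta_{j}$ and the Leibniz rule then gives, pointwise in $t$,
\begin{equation*}
\|\Delta_{j}(m_{q}\Delta_{q}f)(t,\cdot)\|_{L^{p}}\lesssim 2^{([s]+1)(q-j)}\bigl(1+\|f(t,\cdot)\|_{L^{\infty}}\bigr)^{[s]+1}\|F'\|_{W^{[s]+1,\infty}}\|\Delta_{q}f(t,\cdot)\|_{L^{p}}.
\end{equation*}

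Finally, I would take the $L^{\theta}_{T}$ norm of the previous bound, pulling the $L^{\infty}_{T}(L^{\infty})$ factor outside by H\"older's inequality. Multiplying by $2^{js}$ and summing in $j$ with the $\ell^{r}$ structure reduces matters to a discrete Young-type convolution in the variable $j-q$, with weight $2^{([s]+1-s)(q-j)}$ that is summable because $[s]+1>s$. The principal obstacle is precisely the tension between the $2^{kq}$ loss from differentiating the nonlinear symbol $m_{q}$ and the $2^{-([s]+1)j}$ gain from the Bernstein transfer onto $\Delta_{j}$; the spectral constraint $q\geq j-N_{0}$ converts this tension into a geometric decay in $j-q$, so the final $\ell^{r}$-convolution converges and delivers the claimed bound in $\widetilde{L}^{\theta}_{T}(B^{s}_{p,r})$.
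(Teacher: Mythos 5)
The paper never actually proves Proposition \ref{prop2.2}; it is quoted as a standard composition estimate (cf.\ Bahouri--Chemin--Danchin), so the only question is whether your argument is sound, and the step on which everything hinges is false. The symbol $m_{q}=\int_{0}^{1}F'(S_{q}f+t\Delta_{q}f)\,dt$ is a nonlinear composition and is \emph{not} spectrally localized, so $m_{q}\Delta_{q}f$ is not supported in a ball of radius $\lesssim 2^{q}$, and $\Delta_{j}(m_{q}\Delta_{q}f)$ does not vanish for $q<j-N_{0}$. Worse, even granting the claimed restriction to $q\geq j-N_{0}$, your final convolution diverges: after multiplying by $2^{js}$ and rewriting in terms of $2^{qs}\|\Delta_{q}f\|_{L^{p}}$, the weight is $2^{([s]+1-s)(q-j)}$ on the range $q-j\geq-N_{0}$, and since $[s]+1-s>0$ this grows geometrically as $q-j\to+\infty$; it is summable over $q\leq j$, not over $q\geq j-N_{0}$. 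You have interchanged the two regimes of Meyer's first-linearization method: the derivative-transfer bound $2^{([s]+1)(q-j)}$ is useful precisely for the low-frequency blocks $q<j$, and is useless (exponentially growing) for $q\geq j$.

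The repair is standard: split $\Delta_{j}F(f)=\sum_{q<j}\Delta_{j}(m_{q}\Delta_{q}f)+\sum_{q\geq j}\Delta_{j}(m_{q}\Delta_{q}f)$. For $q\geq j$ use only $\|m_{q}\|_{L^{\infty}}\leq\|F'\|_{L^{\infty}}$, so that $\|\Delta_{j}(m_{q}\Delta_{q}f)\|_{L^{p}}\lesssim\|F'\|_{L^{\infty}}\|\Delta_{q}f\|_{L^{p}}$ and the resulting weight $2^{(j-q)s}$ is summable because $s>0$. For $q<j$ use exactly your derivative-transfer estimate (whose derivation of $\|\nabla^{k}m_{q}\|_{L^{\infty}}\lesssim 2^{kq}(1+\|f\|_{L^{\infty}})^{k}\|F'\|_{W^{k,\infty}}$ via Fa\`a di Bruno and Bernstein is correct); there the weight $2^{([s]+1-s)(q-j)}$ decays. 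With both regimes in place, taking the $L^{\theta}_{T}(L^{p})$ norm of each block before the $\ell^{r}$ summation and pulling the factor $(1+\|f\|_{L^{\infty}_{T}(L^{\infty})})^{[s]+1}$ out by H\"older in time, as you propose, does yield the Chemin--Lerner version; the rest of your plan goes through once this is corrected.
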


In addition, we present some estimates of commutators in homogeneous
and inhomogeneous Chemin-Lerner spaces to bound commutators.
\begin{prop}\cite{D}\label{prop2.3}
Let  $1<p<\infty, 1\leq \theta \leq\infty$ and $\
s\in(-\frac{N}{p}-1, \frac{N}{p}]$. Then there exists a generic
constant $C>0$ depending only on $s, N$ such that
$$\cases{\|[f,\dot{\Delta}_{q}]g\|_{L^{p}}\leq
Cc_{q}2^{-q(s+1)}\|f\|_{\dot{B}^{\frac{N}{p}+1}_{p,1}}\|g\|_{\dot{B}^{s}_{p,1}},\cr
\|[f,\dot{\Delta}_{q}]g\|_{L^{\theta}_{T}(L^{p})}\leq
Cc_{q}2^{-q(s+1)}\|f\|_{\widetilde{L}^{\theta_{1}}_{T}(\dot{B}^{\frac{N}{p}+1}_{p,1})}\|g\|_{\widetilde{L}^{\theta_{2}}_{T}(\dot{B}^{s}_{p,1})},}
$$
with $1/\theta=1/\theta_{1}+1/\theta_{2}$, where the commutator
$[\cdot,\cdot]$ is defined by $[f,g]=fg-gf$ and $\{c_{q}\}$ denotes
a sequence such that $\|(c_{q})\|_{ {l^{1}}}\leq 1$.
\end{prop}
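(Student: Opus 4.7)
The plan is to apply Bony's paraproduct decomposition to write $fg = T_f g + T_g f + R(f,g)$ and use the same formula for $f\dot{\Delta}_qg$, obtaining the five-term identity
\begin{equation*}
[f,\dot{\Delta}_q]g = [T_f,\dot{\Delta}_q]g + T_{\dot{\Delta}_qg}f - \dot{\Delta}_q(T_gf) + R(f,\dot{\Delta}_qg) - \dot{\Delta}_qR(f,g).
\end{equation*}
Each summand is then estimated separately in $L^p$ by exploiting the spectral support of the dyadic blocks and Bernstein's inequality.

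For the genuine commutator $[T_f,\dot{\Delta}_q]g$, only the indices $|q'-q|\le 4$ survive by spectral localization. Writing $[S_{q'-1}f,\dot{\Delta}_q]\dot{\Delta}_{q'}g$ as an integral against the kernel of $\dot{\Delta}_q$ and applying a first-order Taylor expansion to $S_{q'-1}f$ extracts a factor $2^{-q}\|\nabla S_{q'-1}f\|_{L^\infty}$, and the last norm is controlled by $\|f\|_{\dot{B}^{N/p+1}_{p,1}}$ via Bernstein. The paraproduct $T_{\dot{\Delta}_qg}f$ runs effectively over $q'\ge q+1$, and here one uses $\|S_{q'-1}(\dot{\Delta}_qg)\|_{L^\infty}\lesssim 2^{qN/p}\|\dot{\Delta}_qg\|_{L^p}$ together with the geometric decay of $\sum_{q'\ge q+1}\|\dot{\Delta}_{q'}f\|_{L^p}$. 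For $\dot{\Delta}_q(T_gf)$ the spectral support again restricts to $|q'-q|\le 4$; here the upper constraint $s\le N/p$ is exactly what is needed to bound $\|S_{q'-1}g\|_{L^\infty}\lesssim 2^{q'(N/p-s)}\|g\|_{\dot{B}^s_{p,1}}$. The mixed remainder $R(f,\dot{\Delta}_qg)$ is trivially handled on the handful of surviving blocks.

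The delicate piece is $\dot{\Delta}_qR(f,g) = \dot{\Delta}_q\sum_{q'\ge q-N_0}\tilde{\dot{\Delta}}_{q'}f\,\dot{\Delta}_{q'}g$: a naive $L^\infty\cdot L^p$ estimate only yields $s>-1$, which is not good enough. To reach the sharp lower threshold $s>-N/p-1$ one must exploit the fact that $\dot{\Delta}_qR(f,g)$ has spectrum in a ball of radius $\sim 2^q$ and use a Bernstein gain. For $p\ge 2$ one estimates $\|\dot{\Delta}_qR(f,g)\|_{L^p}\lesssim 2^{qN/p}\|\dot{\Delta}_qR(f,g)\|_{L^{p/2}}$ and controls the right-hand side by $\sum_{q'\ge q-N_0}\|\tilde{\dot{\Delta}}_{q'}f\|_{L^p}\|\dot{\Delta}_{q'}g\|_{L^p}$ via Hölder; an analogous argument going through $L^1$ and then Bernstein up to $L^p$ handles $1<p<2$. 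In both cases, summing against the $\ell^1$ sequences furnished by the Besov norms produces a geometric factor $2^{-(q'-q)(s+N/p+1)}$, whose summability over $q$ is exactly equivalent to $s+N/p+1>0$.

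Adding the five contributions gives $\|[f,\dot{\Delta}_q]g\|_{L^p}\le Cc_q 2^{-q(s+1)}\|f\|_{\dot{B}^{N/p+1}_{p,1}}\|g\|_{\dot{B}^s_{p,1}}$, where each $c_q$ is a finite convolution of $\ell^1$ sequences and therefore lies in $\ell^1$ with norm bounded by a constant depending only on $s,N,p$. For the Chemin--Lerner version, I would repeat the argument pointwise in $t$, then take $L^\theta(0,T)$ norms and apply Hölder's inequality in time with $1/\theta=1/\theta_1+1/\theta_2$ to each bilinear bound, so that every $L^{\theta_j}_T(L^p)$ norm is re-absorbed into the corresponding $\widetilde{L}^{\theta_j}_T(\dot{B}^{\cdot}_{p,1})$ norm. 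The main technical obstacle is the Bernstein-gain step for $\dot{\Delta}_qR(f,g)$, where the bookkeeping has to be adapted between $p\ge 2$ and $p<2$ and where precisely the lower bound $s>-N/p-1$ emerges as the sharp summability threshold.
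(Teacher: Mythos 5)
Your decomposition into the five pieces $[T_f,\dot{\Delta}_q]g$, $T_{\dot{\Delta}_qg}f$, $-\dot{\Delta}_q(T_gf)$, $R(f,\dot{\Delta}_qg)$, $-\dot{\Delta}_qR(f,g)$ is exactly the one used in the paper's Appendix (the terms $K_1,\dots,K_5$ in the proof of Proposition \ref{prop5.1}), and your treatment of the first four pieces matches it, including the correct identification of where the upper bound $s\le N/p$ enters, namely through $\|S_{q'-1}g\|_{L^\infty}\lesssim 2^{q'(N/p-s)}\|g\|_{\dot{B}^{s}_{p,1}}$ in the term $\dot{\Delta}_q(T_gf)$. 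The real divergence is in the remainder $\dot{\Delta}_qR(f,g)$: the paper never actually proves Proposition \ref{prop2.3} on the stated range (it is quoted from \cite{D}); its Appendix deliberately strengthens the hypothesis to $s>-1$ and estimates $K_3$ by the naive H\"older pairing $\|\dot{\Delta}_{k}f\|_{L^{p_2}}\|\dot{\Delta}_{k'}g\|_{L^{p_1}}$, which is precisely why Proposition \ref{prop5.1} carries the restriction $s>-1$. Your Bernstein-gain argument through $L^{p/2}$ is the correct extra ingredient needed to reach the threshold $s>-N/p-1$, and it is complete for $p\ge 2$ --- in particular for $p=2$, the only case the paper invokes.

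There is, however, a gap in the claim that ``an analogous argument going through $L^1$'' handles $1<p<2$ on the full stated range. Going through $L^1$ means pairing $\|\dot{\Delta}_{q'}f\|_{L^{p'}}\|\dot{\Delta}_{q'}g\|_{L^{p}}$ and paying $2^{qN/p'}$ to return from $L^1$ to $L^p$, while Bernstein on the $f$-block costs $2^{q'N(1/p-1/p')}$; the net geometric factor is $2^{(q-q')(s+1+N/p')}$, whose summability over $q'\ge q-N_0$ requires $s>-N/p'-1$, not $s>-N/p-1$. Since $N/p'<N/p$ when $p<2$, this strictly misses part of the claimed range, and the loss is intrinsic to the remainder term: the sharp condition reachable by any such Bernstein/H\"older pairing is $s>-\min(N/p,N/p')-1$. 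You should either restrict to $p\ge 2$ or record the degraded range for $p<2$; for this paper the point is immaterial, since only $p=2$ is used. The passage to the Chemin--Lerner version by H\"older in time is routine and matches the paper's own remark at the end of the Appendix.
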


\section{Entropy and Local well-posedness}\label{sec:3}
\setcounter{equation}{0}

First, let us introduce the energy function which is just an entropy
in the sense of Definition 2.1 of \cite{KY}:
$$\eta(\rho,\mathbf{m}):=\frac{|\mathbf{m}|^2}{2\rho}+h(\rho)\ \ \ \mbox{with}\ \
\ \mathbf{m}=\rho\textbf{v} \ \mbox{and}\ \
h'(\rho)=\int^{\rho}_{1}\frac{p'(s)}{s}ds.$$ For this rigorous
verification, see, \textit{e.g.}, \cite{XK}. Furthermore, the
associated entropy flux is
$$q(\rho,\mathbf{m})=\Big(\frac{|\mathbf{m}|^2}{2\rho^2}+\rho h'(\rho)\Big)\frac{\mathbf{m}}{\rho}.$$

Define
$$W=\left(
      \begin{array}{c}
        W_{1} \\
        W_{2} \\
      \end{array}
    \right):=\nabla\eta(\rho,\mathbf{m})=\left(
                             \begin{array}{c}
                               -\frac{|\mathbf{m}|^2}{2\rho^2}+h'(\rho) \\
                               \mathbf{m}/\rho \\
                             \end{array}
                           \right).
$$
Clearly, the mapping $U\rightarrow W$ is a diffeomorphism from
$\mathcal{O}_{(\rho,\mathbf{m})}:=\mathbb{R}^{+}\times
\mathbb{R}^{d}$ onto its range $\mathcal{O}_{W}$, and for classical
solutions $(\rho,\textbf{v})$ away from vacuum, (\ref{R-E1}) is
equivalent to the symmetric system
\begin{equation}
A^{0}(W)W_{t}+\sum_{j=1}^{d}A^{j}(W)W_{x_{j}}=H(W) \label{R-E9}
\end{equation}
with $$A^{0}(W)=\left(
          \begin{array}{cc}
            1 & W_{2}^{\top} \\
            W_{2} & W_{2}\otimes W_{2}+p'(\rho)I_{d} \\
          \end{array}
        \right),$$

$$A^{j}(W)=\left(
          \begin{array}{cc}
            W_{2j} & W_{2}^{\top}W_{2j}+p'(\rho)e_{j}^{\top} \\
            W_{2}W_{2j}+p'(\rho)e_{j} & W_{2j}(W_{2}\otimes W_{2}+p'(\rho)I_{d})+p'(\rho)(W_{2}\otimes e_{j}+e_{j}\otimes W_{2}) \\
          \end{array}
        \right),$$

$$H(W)=G(U(W))=p'(\rho)\left(
                 \begin{array}{c}
                   0 \\
                   -\frac{W_{2}}{\tau} \\
                 \end{array}
               \right)
,$$ where $I_{d}$ stands for the $d\times d$ unit matrix, and
$e_{j}$ is $d$-dimensional vector where the $j$th component is one,
others are zero. It follows from the definition of entropy variable
$W$ that $h'(\rho)=W_{1}+|W_{2}|^2/2$, so $p'(\rho)$ can be viewed
as a function of $W$, since $\rho$ is the function of
$W_{1}+|W_{2}|^2/2$, i.e. of $W$.

The corresponding initial data become into
\begin{equation}
W(0,x):=W_{0}=\Big(-\frac{|\textbf{v}_{0}|^2}{2}+h'(\rho_{0}),\textbf{v}_{0}\Big).\label{R-E10}
\end{equation}

In \cite{XK}, we have recently established a local well-posedness
theory for generally symmetric hyperbolic systems in the framework
of critical spaces, which is regarded as the generalization of the
classical existence theory of Kato and Majda \cite{K,M}. Actually,
the theory is also true in larger Besov spaces, whose the regularity
indices satisfy the condition (\ref{R-E666}). Hence, we can have the
following local existence result for the concrete problem
(\ref{R-E9})-(\ref{R-E10}).

\begin{prop} \label{prop3.1} For any fixed relaxation time $\tau>0$,
assume that the initial data $W_{0}$ satisfy $W_{0}-\bar{W}\in
B^{\sigma}_{2,r}(\bar{W}:=(h'(\bar{\rho}),\mathbf{0}))$ and take
values in a compact subset of $\mathcal{O}_{W}$, then, there exists
a time $T_{0}>0$ such that
\begin{itemize}
\item[(i)] Existence:  the
Cauchy problem (\ref{R-E9})-(\ref{R-E10}) has a unique classical
solution $W\in \mathcal{C}^{1}([0,T_{0}] \times \mathbb{R}^{d})$
satisfying
$$W-\bar{W}\in \widetilde{\mathcal{C}}_{T_{0}}(B^{\sigma}_{2,r})
\cap\widetilde{\mathcal{C}}^{1}_{T_{0}}(B^{\sigma-1}_{2,r});
$$

\item[(ii)] Blow-up criterion: there exists a constant $C_{0}>0$
such that the maximal time $T^{*}$ of existence of such a solution
can be bounded from below by
$T^{*}\geq\frac{C_{0}}{\|W_{0}-\bar{W}\|_{B^{\sigma}_{2,r}}}.$
Moreover, if $T^{*}$ is finite, then
$$\limsup_{t\rightarrow T^{*}}\|W-\bar{W}\|_{B^{\sigma}_{2,r}}=\infty$$ if and only if
$$\int^{T^{*}}_{0}\|\nabla W\|_{L^{\infty}}dt=\infty.$$
\end{itemize}
\end{prop}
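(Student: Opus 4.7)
The plan is to adapt the Friedrichs-iteration scheme developed by the same authors in \cite{XK} for the critical case to the Besov scale $B^{\sigma}_{2,r}$ under condition (\ref{R-E666}). Writing $V=W-\bar{W}$, the system (\ref{R-E9}) becomes a quasilinear symmetric hyperbolic system with reference state $0$, while the damping $H(W)$ is linear in $W_2$. I would construct iterates $\{W^{n}\}$ by setting $W^{0}\equiv\bar{W}$ and letting $W^{n+1}-\bar{W}$ solve the \emph{linear} symmetric hyperbolic system with matrices frozen at $W^{n}$ and initial data $W_{0}-\bar{W}$. Since $A^{0}(W^{n})$ stays uniformly positive definite as long as $W^{n}$ remains in a compact subset of $\mathcal{O}_{W}$, standard linear theory (via Friedrichs mollification) produces $W^{n+1}$ on some common time interval.

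The core of the proof is a uniform a priori estimate in $\widetilde{L}^{\infty}_{T}(B^{\sigma}_{2,r})$. Applying the dyadic block $\Delta_{q}$ to (\ref{R-E9}) for $V$ yields
\begin{equation*}
A^{0}(W)\partial_{t}\Delta_{q}V+\sum_{j=1}^{N}A^{j}(W)\partial_{x_{j}}\Delta_{q}V=\Delta_{q}H(W)+R_{q},
\end{equation*}
with remainder $R_{q}=[A^{0}(W),\Delta_{q}]\partial_{t}V+\sum_{j}[A^{j}(W),\Delta_{q}]\partial_{x_{j}}V$. Taking the $L^{2}$ inner product with $\Delta_{q}V$ and using the symmetry of $A^{0},A^{j}$ produces an energy identity whose right-hand side is dominated by $\|\nabla W\|_{L^{\infty}}\|\Delta_{q}V\|_{L^{2}}^{2}+\|R_{q}\|_{L^{2}}\|\Delta_{q}V\|_{L^{2}}$. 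Multiplying by $2^{q\sigma}$ and taking $\ell^{r}$ norms, I would invoke the extended version of Proposition \ref{prop2.3} (valid in the full range of $(\sigma,r)$ covered by (\ref{R-E666}) and proved in the appendix) to bound the $R_{q}$-contribution by $\|\nabla W\|_{L^{\infty}}\|V\|_{B^{\sigma}_{2,r}}^{2}$; the product/composition estimates of Propositions \ref{prop2.1}--\ref{prop2.2} handle $\Delta_{q}H(W)$. A standard Gronwall argument then produces uniform bounds on $\|V^{n}\|_{\widetilde{L}^{\infty}_{T_{0}}(B^{\sigma}_{2,r})}$ on an interval of length proportional to $\|V_{0}\|_{B^{\sigma}_{2,r}}^{-1}$, which simultaneously gives the lower bound on $T^{*}$ in part (ii).

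Convergence of the sequence is obtained in the weaker norm $\widetilde{\mathcal{C}}_{T_{0}}(B^{\sigma-1}_{2,r})$: the difference $W^{n+1}-W^{n}$ satisfies a linear symmetric hyperbolic system with coefficients controlled by the uniform higher-norm bound, and an $L^{2}$-type energy estimate at regularity $\sigma-1$ (the loss of one derivative being the usual price of quasilinear dependence) shows that $\{W^{n}\}$ is Cauchy. The limit $W$ inherits the higher-norm bound; strong time continuity $\widetilde{\mathcal{C}}_{T_{0}}(B^{\sigma}_{2,r})$ follows since each dyadic block is continuous in time and the $\ell^{r}$ tail is controlled uniformly, while $\widetilde{\mathcal{C}}^{1}_{T_{0}}(B^{\sigma-1}_{2,r})$ is read off from the equation. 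Uniqueness is an immediate by-product of the same stability estimate.

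For the blow-up criterion, the decisive point is that the revised commutator estimate makes $\nabla W$ enter the right-hand side only through $\|\nabla W\|_{L^{\infty}}$, not through a full Besov norm. Gronwall then delivers
\begin{equation*}
\|V(t)\|_{B^{\sigma}_{2,r}}\leq\|V_{0}\|_{B^{\sigma}_{2,r}}\exp\Big(C\int^{t}_{0}\|\nabla W\|_{L^{\infty}}\,ds\Big),
\end{equation*}
so that the solution extends past any time where the integral is finite; the converse is immediate from the embedding $B^{\sigma-1}_{2,r}\hookrightarrow L^{\infty}$, which is valid throughout the range (\ref{R-E666}). The principal technical obstacle is precisely the commutator bound: Proposition \ref{prop2.3} as stated covers $s\in(-\tfrac{N}{2}-1,\tfrac{N}{2}]$, whereas the range needed here includes $\sigma>1+N/2$; verifying that a version with only $\|\nabla W\|_{L^{\infty}}$ on the right-hand side holds on the full range forces a careful Bony-decomposition argument separating the paraproduct and remainder contributions, which is the content of the appendix.
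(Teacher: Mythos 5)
The paper gives no proof of Proposition \ref{prop3.1}: it simply invokes the local well-posedness theory for general symmetric hyperbolic systems established in \cite{XK} and asserts that it carries over to the range (\ref{R-E666}), with the extended commutator estimate of the appendix supplying the needed technical input. Your outline — Friedrichs iteration with coefficients frozen at the previous iterate, uniform $\widetilde{L}^{\infty}_{T}(B^{\sigma}_{2,r})$ bounds via Proposition \ref{prop5.1}, contraction in the weaker norm $B^{\sigma-1}_{2,r}$, and a Gronwall argument for the blow-up criterion (using $B^{\sigma-1}_{2,r}\hookrightarrow L^{\infty}$, valid throughout (\ref{R-E666})) — is precisely that standard Kato--Majda-type argument and is sound.
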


\section{Global well-posedness}\label{sec:4}\setcounter{equation}{0}
\setcounter{equation}{0} This Section is devoted to the global
existence result in Theorem \ref{thm1.1}. To show that the solutions
of (\ref{R-E9})-(\ref{R-E10}), are globally defined, we need further
a priori estimates.

To do this, for any time $T>0$ and for any solution $W-\bar{W}\in
\widetilde{\mathcal{C}}_{T}(B^{\sigma}_{2,r})\cap\widetilde{\mathcal{C}}^{1}_{T}(B^{\sigma-1}_{2,r})$,
we define by $E(T)$ the energy functional and by $D_{\tau}(T)$ the
corresponding dissipation functional:
$$
E(T):=\|W-\bar{W}\|_{\widetilde{L}_{T}^\infty(B^{\sigma}_{2,r})},$$
$$D_{\tau}(T):=\frac{1}{\sqrt{\tau}}\|W_{2}\|_{\widetilde{L}_{T}^2(B^{\sigma}_{2,r})}
+\sqrt{\tau}\|\nabla W\|_{\widetilde{L}_{T}^2(B^{\sigma-1}_{2,r})},
$$
and $E(0):=\|W_{0}-\bar{W}\|_{B^{\sigma}_{2,r}}$. We also define $$
S(T):=\|W\|_{L^\infty([0,T]\times\mathbb{R}^{d})}+\|\nabla
W\|_{L^\infty([0,T]\times\mathbb{R}^{d})}.
$$
Note that the imbedding in Lemma \ref{lem2.2} and Remark
\ref{rem2.2}, we have $S(T)\leq CE(T)$ for some generic constant
$C>0$.

The next central task is to construct the desired a priori estimate,
which is included in the following proposition.
\begin{prop}\label{prop4.1}
Let $W$ be the solution of (\ref{R-E9})-(\ref{R-E10}) satisfying
$W-\bar{W}\in
\widetilde{\mathcal{C}}_{T}(B^{\sigma}_{2,r})\cap\widetilde{\mathcal{C}}^{1}_{T}(B^{\sigma-1}_{2,r})$.
If $W(t,x)$ takes values in a neighborhood of $\bar{W}$, then there
exists a non-decreasing continuous function
$C:\mathbb{R}^{+}\rightarrow \mathbb{R}^{+}$ which is independent of
$\tau$, such that the following nonlinear inequality holds:
\begin{eqnarray}
E(T)+D_{\tau}(T)\leq
C(S(T))\Big(E(0)+E(T)^{1/2}D_{\tau}(T)+E(T)D_{\tau}(T)\Big).\label{R-E11}
\end{eqnarray}
Furthermore, there exist some positive constants $\delta_{1},
\mu_{1}$ and $C_{1}$ independent of $\tau$, if $E(T)\leq
\delta_{1}$, then
\begin{eqnarray}E(T)+\mu_{1}D_{\tau}(T)\leq C_{1}E(0).\label{R-E12}
\end{eqnarray}

\end{prop}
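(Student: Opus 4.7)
The plan is to establish~(\ref{R-E11}) via a Shizuta--Kawashima-type energy estimate carried out on each dyadic block $\Delta_q W$, then to sum the block estimates in the Chemin--Lerner topology. The first step is to apply $\Delta_q$ to the symmetric system~(\ref{R-E9}), obtaining
$$A^{0}(W)\,\partial_t \Delta_q W + \sum_{j=1}^{d} A^{j}(W)\,\partial_{x_j}\Delta_q W = \Delta_q H(W) + R_q,$$
where $R_q$ collects the commutators $[A^{j}(W),\Delta_q]\partial_{x_j}W$ and the analogous $A^{0}$-commutator. Taking the $L^2$ inner product with $\Delta_q W$ and using the symmetry $A^{j}=(A^{j})^{\top}$ yields, after absorbing time and spatial derivatives of the coefficients into a term of size $S(T)\|\Delta_q W\|_{L^2}^2$, a basic inequality controlling $\frac{d}{dt}\langle A^{0}(W)\Delta_q W,\Delta_q W\rangle + \tau^{-1}\|\sqrt{p'(\rho)}\,\Delta_q W_2\|_{L^2}^2$ by $\|R_q\|_{L^2}\|\Delta_q W\|_{L^2}$. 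This step alone dissipates only the relaxation component $W_2$.

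The second, and central, step upgrades this partial dissipation to the two-scale dissipation coded in $D_\tau(T)$. For this I use the Shizuta--Kawashima compensating matrix $K(\xi)$ associated with the linearization of~(\ref{R-E9}) around $\bar W$: the (SK) condition, verified for the entropy-Euler system in~\cite{KY,XK}, supplies a skew-adjoint symbol, homogeneous of degree $-1$, whose commutator with the hyperbolic principal part compensates the degeneracy of the damping matrix. Adding to the natural energy a small multiple of a $\tau$-weighted cross term $\mathrm{Re}\langle K(D)\Delta_q W,\Delta_q W\rangle$ produces a Lyapunov functional $\mathcal{L}_q(t)$ equivalent to $\|\Delta_q W\|_{L^2}^2$ whose time derivative dominates $c\,\tau^{-1}\|\Delta_q W_2\|_{L^2}^2 + c\,\frac{\tau\,2^{2q}}{1+\tau^{2}2^{2q}}\|\Delta_q W\|_{L^2}^2$ modulo error terms. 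Integration in time, extraction of square roots, multiplication by $2^{q\sigma}$, and $\ell^{r}_{q}$-summation reproduces on the left both $\tau^{-1/2}\|W_2\|_{\widetilde L^2_T(B^\sigma_{2,r})}$ and $\tau^{1/2}\|\nabla W\|_{\widetilde L^2_T(B^{\sigma-1}_{2,r})}$. The passage from homogeneous block dissipation (natural since $K$ has degree $-1$) to the inhomogeneous Chemin--Lerner norms in~(\ref{R-E11}) is handled by Lemma~\ref{lem4.1}.

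The third step is the nonlinear error control. The commutators $[A^{j}(W),\Delta_q]\partial_{x_j}W$ are bounded by Proposition~\ref{prop2.3} at regularity index $s=\sigma-1$, producing a contribution of the form $\|W-\bar W\|_{\widetilde L^\infty_T(B^\sigma_{2,r})}\cdot\|\nabla W\|_{\widetilde L^2_T(B^{\sigma-1}_{2,r})}$; the terms arising from $A^{j}(W)-A^{j}(\bar W)$ and from $\partial_t A^{0}(W)$ are handled by the composition estimate of Proposition~\ref{prop2.2} together with the product rule of Proposition~\ref{prop2.1}. Distributing one or two $\sqrt{\tau}$ factors across each bilinear/trilinear piece (extracting $\tau^{+1/2}$ from a gradient factor and $\tau^{-1/2}$ from a $W_2$ factor, while the remaining factor is measured in the undissipated $\widetilde L^\infty_T(B^\sigma_{2,r})$ norm) produces exactly the structure $E(T)^{1/2}D_\tau(T)$ and $E(T)\,D_\tau(T)$ on the right-hand side of~(\ref{R-E11}). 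I expect the main obstacle here to be the simultaneous calibration of the $\tau$-weights: the multiplier in front of the SK cross term must be tuned so that the generated cross-dissipation is coercive in both the low-frequency regime $\tau\,2^q \ll 1$ and the high-frequency regime $\tau\,2^q \gg 1$, \emph{and} the resulting two-scale dissipation reproduces the norms defining $D_\tau(T)$ after $\ell^{r}_{q}$-summation.

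Finally, the closing inequality~(\ref{R-E12}) follows from~(\ref{R-E11}) by the standard smallness/absorption argument: since $S(T)\leq C\,E(T)$, smallness of $E(T)\leq \delta_1$ bounds $C(S(T))$ by an absolute constant, and for $\delta_1$ small enough the terms $C(S(T))\bigl(E(T)^{1/2}+E(T)\bigr)D_\tau(T)$ on the right can be absorbed into $\mu_1 D_\tau(T)$ on the left, leaving $E(T)+\mu_1 D_\tau(T)\leq C_1 E(0)$.
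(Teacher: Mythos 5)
Your overall architecture (block energy estimate for the $W_2$-dissipation, Shizuta--Kawashima compensating matrix for the gradient dissipation, commutator/composition estimates for the errors, absorption for (\ref{R-E12})) matches the paper's, but two steps as you state them would not go through. First, in your Step 1 you propose ``absorbing time and spatial derivatives of the coefficients into a term of size $S(T)\|\Delta_q W\|_{L^2}^2$.'' This fails: $S(T)$ controls only $\|W\|_{L^\infty}+\|\nabla W\|_{L^\infty}$, whereas $\partial_t W$ contains the relaxation term and is of size $\tau^{-1}\|W_2\|_{L^\infty}$, which is not uniformly bounded; a Gronwall coefficient of that size destroys the $\tau$-uniformity. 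The paper's proof instead splits $A^{0}=A^{0}_{I}(W_2)+A^{0}_{II}(W)$ (see (\ref{R-E16})--(\ref{R-E22})) and exploits the block structure so that every occurrence of the $\tau^{-1}W_2$ part of $\partial_t W$ is paired with \emph{two} factors of type $\dot{\Delta}_q W_2$ measured in $\widetilde{L}^2_T(\dot{B}^{\sigma}_{2,r})$ or $\widetilde{L}^2_T(\dot{B}^{\sigma-1}_{2,r})$; only then does the product close as $E(T)D_\tau(T)^2$ rather than as $\tau^{-1}E(T)D_\tau(T)^2$. The same structural pairing is needed in the commutator $[\dot{\Delta}_q,A^{0}(W)]\partial_t W$ (cf. (\ref{R-E28})--(\ref{R-E29})). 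Your generic formulation skips exactly the point where the uniformity in $\tau$ is won or lost.

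Second, the hypocoercive dissipation rate you claim for the Lyapunov functional, $c\,\tau^{-1}\|\Delta_q W_2\|^2+c\,\frac{\tau 2^{2q}}{1+\tau^{2}2^{2q}}\|\Delta_q W\|^2$, does \emph{not} reproduce $\sqrt{\tau}\|\nabla W\|_{\widetilde{L}^2_T(B^{\sigma-1}_{2,r})}$: for $\tau 2^{q}\gg 1$ the second coefficient saturates at $\tau^{-1}$, which is smaller than the required $\tau 2^{2q}$ by the factor $(\tau 2^{q})^{2}$. This is precisely the ``calibration'' obstacle you flag but do not resolve, and it cannot be resolved by tuning the multiplier: the undamped component $W_1$ genuinely decays no faster than $e^{-t/\tau}$ at high frequencies. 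The paper's resolution (Lemma \ref{lem4.3}) is different in kind: it does not ask a single Lyapunov functional to produce the gradient dissipation. Instead, the SK identity (\ref{R-E49}), multiplied by $|\xi|$ and \emph{integrated in time}, bounds $\tau\|\Delta_q\nabla W\|^2_{L^2_T(L^2)}$ by $\tau^{-1}\bigl(\|\Delta_q W_2\|^2_{L^2_T(L^2)}+\|\Delta_q\nabla W_2\|^2_{L^2_T(L^2)}\bigr)$ plus energy and $\mathcal{G}$ terms (see (\ref{R-E53})--(\ref{R-E54})); the term $\tau^{-1}\|\Delta_q\nabla W_2\|^2$, weighted by $2^{2q(\sigma-1)}$, is exactly the $W_2$-dissipation at the \emph{top} regularity $\sigma$ already secured in the first lemma. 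In other words, the high-frequency gradient dissipation is traded for one extra derivative on the damped component, not generated by the cross term. You should restructure Step 2 accordingly (the paper also performs this step on the system linearized at $\bar W$, with the quadratic remainder $\mathcal{G}$ estimated as in (\ref{R-E56})). Your closing absorption argument for (\ref{R-E12}) is correct.
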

The proof of Proposition \ref{prop4.1}, in fact, is to capture the
dissipation rates of $W=(W_{1},W_{2})$ in turn by using the Fourier
localization arguments. First, we give an elementary fact that
indicates the connection between the homogeneous and inhomogeneous
Chemin-Lerner spaces, which have been established in the recent work
\cite{XK}. Precisely, it reads as follows:
\begin{lem}\label{lem4.1}
Let $s>0, 1\leq\theta, p, r\leq+\infty$. When $\theta\geq r$, it
holds that
\begin{eqnarray}L^{\theta}_{T}(L^p)\cap\widetilde{L}^{\theta}_{T}(\dot{B}^{s}_{p,r})=\widetilde{L}^{\theta}_{T}(B^{s}_{p,r})\label{R-E43}\end{eqnarray}
for any $T>0$.
\end{lem}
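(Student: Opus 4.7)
My plan is to prove the set equality of the two spaces by establishing the two mutual inclusions (with equivalent norms). This is the time-localized Chemin--Lerner analogue of the classical embedding $B^{s}_{p,r} = L^{p}\cap \dot{B}^{s}_{p,r}$ (Lemma \ref{lem2.2}(1)), and in both directions I would split the Littlewood--Paley decomposition at $q=0$, estimating the low-frequency blocks ($q\leq -1$) and the high-frequency blocks ($q\geq 0$) separately.

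For the inclusion $\widetilde{L}^{\theta}_{T}(B^{s}_{p,r})\hookrightarrow L^{\theta}_{T}(L^{p})\cap \widetilde{L}^{\theta}_{T}(\dot{B}^{s}_{p,r})$, I would first note that the high-frequency homogeneous blocks coincide with the inhomogeneous ones ($\dot{\Delta}_{q}f=\Delta_{q}f$ for $q\geq 0$), so those terms are controlled automatically. For the low-frequency homogeneous blocks $q\leq -1$, Young's inequality together with the scaling identity $\|\Phi_{q}\|_{L^{1}}=\|\Phi_{0}\|_{L^{1}}$ yields $\|\dot{\Delta}_{q}f\|_{L^{p}}\lesssim \|f\|_{L^{p}}$ uniformly in $q$, and the geometric series $\sum_{q\leq -1}2^{qsr}$ converges because $s>0$, so the low-frequency contribution is dominated by $\|f\|_{L^{\theta}_{T}(L^{p})}$. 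Finally, to bound the $L^{\theta}_{T}(L^{p})$-norm itself by the Chemin--Lerner norm, I apply Hölder in $q$ with dual exponent $r'$ (the sum $\sum 2^{-qsr'}$ again converges thanks to $s>0$) to obtain pointwise in $t$ the embedding $\|f(t)\|_{L^{p}}\lesssim \|f(t)\|_{B^{s}_{p,r}}$, integrate in $t$, and then invoke Remark \ref{rem2.2} under the hypothesis $\theta\geq r$ to dominate $\|f\|_{L^{\theta}_{T}(B^{s}_{p,r})}$ by $\|f\|_{\widetilde{L}^{\theta}_{T}(B^{s}_{p,r})}$.

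For the reverse inclusion $L^{\theta}_{T}(L^{p})\cap \widetilde{L}^{\theta}_{T}(\dot{B}^{s}_{p,r})\hookrightarrow \widetilde{L}^{\theta}_{T}(B^{s}_{p,r})$ I would split the inhomogeneous $\ell^{r}$-sum at $q=-1$. The single low-frequency block is handled by $\|\Delta_{-1}f\|_{L^{\theta}_{T}(L^{p})}=\|\Psi\ast f\|_{L^{\theta}_{T}(L^{p})}\leq \|\Psi\|_{L^{1}}\|f\|_{L^{\theta}_{T}(L^{p})}$ via Young's inequality, while for $q\geq 0$ the identity $\Delta_{q}f=\dot{\Delta}_{q}f$ shows that those contributions already appear inside $\|f\|_{\widetilde{L}^{\theta}_{T}(\dot{B}^{s}_{p,r})}$. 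This direction requires no compatibility between $\theta$ and $r$.

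The main obstacle is the step in the first inclusion that exchanges the $L^{\theta}_{T}$-norm with the $\ell^{r}$-type Besov summation, i.e.\ the passage from $\|f\|_{L^{\theta}_{T}(B^{s}_{p,r})}$ to $\|f\|_{\widetilde{L}^{\theta}_{T}(B^{s}_{p,r})}$. This is exactly an instance of (generalized) Minkowski's inequality and only runs in the direction we need when $\theta\geq r$, which is precisely why the hypothesis in the statement is essential and (for this argument) sharp. All other ingredients are routine: Young's inequality for $L^{1}$-convolution kernels, the uniform $L^{p}$-boundedness of each $\dot{\Delta}_{q}$, and geometric-series convergence coming from $s>0$.
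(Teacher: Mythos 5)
Your proof is correct. Note that the paper does not actually prove Lemma \ref{lem4.1} here --- it defers to \cite{XK} --- so there is no in-text argument to compare against; your frequency-splitting at $q=0$ (high homogeneous blocks coincide with inhomogeneous ones, low blocks controlled by $\|f\|_{L^{\theta}_{T}(L^{p})}$ via Young's inequality and the geometric series $\sum_{q\leq-1}2^{qsr}$, the single block $\Delta_{-1}f=\Psi\ast f$ handled by Young) is exactly the standard argument one would expect, and it is the time-dependent version of Lemma \ref{lem2.2}(1). One remark: the hypothesis $\theta\geq r$ enters your argument only through the detour $\|f\|_{L^{\theta}_{T}(L^{p})}\lesssim\|f\|_{L^{\theta}_{T}(B^{s}_{p,r})}\leq\|f\|_{\widetilde{L}^{\theta}_{T}(B^{s}_{p,r})}$ via Remark \ref{rem2.2}, so it is not quite ``sharp for this argument'' in the sense you claim: one can instead bound $\|f\|_{L^{\theta}_{T}(L^{p})}\leq\sum_{q\geq-1}\|\Delta_{q}f\|_{L^{\theta}_{T}(L^{p})}$ by the triangle inequality for the $L^{\theta}_{T}$-norm and then apply H\"older in $q$ with exponent $r'$ (using $s>0$), which gives the needed control by $\|f\|_{\widetilde{L}^{\theta}_{T}(B^{s}_{p,r})}$ with no constraint relating $\theta$ and $r$. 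This does not affect the validity of your proof of the lemma as stated; it only shows the stated restriction $\theta\geq r$ is a convenience of the route through Remark \ref{rem2.2} rather than a necessity.
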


Next we begin to prove the Proposition \ref{prop4.1}. For clarity,
we divide it into several lemmas.

\begin{lem} Under the assumptions stated in Proposition
\ref{prop4.1}, there exists a non-decreasing continuous function
$C:\mathbb{R}^{+}\rightarrow \mathbb{R}^{+}$ which is independent of
$\tau$, such that the following estimate holds:
\begin{eqnarray}
\|W-\bar{W}\|_{\widetilde{L}_{T}^\infty(B^{\sigma}_{2,r})}+\frac{1}{\sqrt{\tau}}\|W_{2}\|_{\widetilde{L}_{T}^2(B^{\sigma}_{2,r})}\leq
C(S(T))(E(0)+E(T)^{1/2}D_{\tau}(T)).\label{R-E13}
\end{eqnarray}
\end{lem}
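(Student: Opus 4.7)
The plan is to run a Fourier-localized $L^2$ energy estimate on the symmetrized system (3.1), working with $\widetilde W:=W-\bar W$ (so $\widetilde W_2=W_2$ since $\bar W_2=0$). Applying the dyadic block $\dot\Delta_q$ to (3.1) yields
$$A^0(W)\,\partial_t\dot\Delta_q\widetilde W+\sum_{j=1}^{d}A^j(W)\,\partial_{x_j}\dot\Delta_q\widetilde W+\frac{1}{\tau}\,\dot\Delta_q\bigl(p'(\rho)\,\mathbf P_2\widetilde W\bigr)=\mathcal R_q,$$
with $\mathbf P_2$ the projection onto the momentum block and $\mathcal R_q$ the sum of commutators $[A^0(W),\dot\Delta_q]\partial_t\widetilde W$ and $[A^j(W),\dot\Delta_q]\partial_{x_j}\widetilde W$.

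Pairing in $L^2$ with $\dot\Delta_q\widetilde W$ and using the symmetry of $A^0,A^j$, the coercivity $\langle A^0(W)V,V\rangle\sim\|V\|_{L^2}^2$ (valid near $\bar W$), and positivity of $p'(\rho)$ produces
$$\tfrac12\frac{d}{dt}\langle A^0(W)\dot\Delta_q\widetilde W,\dot\Delta_q\widetilde W\rangle+\frac{c}{\tau}\|\dot\Delta_q W_2\|_{L^2}^2\lesssim S(T)\|\dot\Delta_q\widetilde W\|_{L^2}^2+\bigl|\langle\mathcal R_q+\mathcal S_q,\dot\Delta_q\widetilde W\rangle\bigr|,$$
where $\mathcal S_q:=\tau^{-1}[p'(\rho),\dot\Delta_q]W_2$ stems from commuting $\dot\Delta_q$ with the relaxation source. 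Integration in $t$, followed by the standard square-root/division step, produces the scale-$q$ bound
$$\|\dot\Delta_q\widetilde W\|_{L^\infty_TL^2}+\tau^{-1/2}\|\dot\Delta_q W_2\|_{L^2_TL^2}\leq C(S(T))\bigl(\|\dot\Delta_q\widetilde W_0\|_{L^2}+\|\mathcal R_q\|_{L^1_TL^2}+\|\mathcal S_q\|_{L^1_TL^2}\bigr).$$

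I would then multiply by $2^{q\sigma}$ and sum in $\ell^r_q$ over $q\in\mathbb Z$. The commutators $\mathcal R_q$ are controlled by Proposition 2.3 (in the refined form proved in the appendix, which covers the couple $(\sigma,r)$ allowed by (1.4)), combined with Proposition 2.2 applied to the composition $W\mapsto A^j(W)-A^j(\bar W)$ and the embedding $B^\sigma_{2,r}\hookrightarrow\dot B^{N/2+1}_{2,1}$; a H\"older split in time, balanced by the $\tau$-weights, puts one factor in $\widetilde L^\infty_T$ (yielding $E(T)^{1/2}$) and the other in $\widetilde L^2_T$ (yielding $D_\tau(T)$), producing $C(S(T))E(T)^{1/2}D_\tau(T)$ on the right. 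The source commutator $\mathcal S_q$ is handled by writing $p'(\rho)=p'(\bar\rho)+[p'(\rho)-p'(\bar\rho)]$: the constant part is annihilated by the commutator, and the perturbation is small in $B^\sigma_{2,r}$ by Proposition 2.2 together with smallness of $E(T)$, so that $\mathcal S_q$ is absorbed into the $\tau^{-1/2}\|W_2\|_{\widetilde L^2_TB^\sigma_{2,r}}$ term on the left. To promote the resulting homogeneous estimate to the inhomogeneous one in the statement, I would complement with a direct, un-decomposed $L^\infty_TL^2+\tau^{-1/2}L^2_TL^2$ estimate on $\widetilde W$ and invoke Lemma \ref{lem4.1} with $\theta\in\{\infty,2\}$ (both $\geq r$) to identify $L^\theta_T(L^2)\cap\widetilde L^\theta_T(\dot B^\sigma_{2,r})$ with $\widetilde L^\theta_T(B^\sigma_{2,r})$.

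The main obstacle will be uniform-in-$\tau$ control of $\mathcal S_q$: its $1/\tau$ prefactor prevents treating it as a routine remainder, so it must be absorbed by the $W_2$-dissipation, which in turn relies on smallness of $E(T)$ in the critical norm and on $p'(\bar\rho)$ being constant. A secondary technical difficulty is that Proposition \ref{prop2.3} as stated requires third index $1$ on the factor $g$; the general couple $(\sigma,r)$ with $r\in\{1,2\}$ makes the $\dot B^s_{p,r}$-version from the appendix indispensable.
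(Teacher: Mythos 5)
Your plan coincides with the paper's proof in all essentials: a dyadic energy estimate on the symmetrized system with the appendix's refined commutator bounds, extraction of the constant coefficient $p'(\bar\rho)$ from the stiff source so that only the small perturbation $p'(\rho)-p'(\bar\rho)$ enters the remainder, and a separate un-decomposed $L^\infty_T(L^2)\cap\tau^{-1/2}L^2_T(L^2)$ bound combined with Lemma \ref{lem4.1} to pass from $\dot B^{\sigma}_{2,r}$ to $B^{\sigma}_{2,r}$. The only cosmetic differences are that the paper obtains the low-frequency $L^2$ bound from the relative entropy identity rather than a direct energy estimate, and keeps the source remainder on the right as a term of size $E(T)D_\tau(T)^2$ (hence $E(T)^{1/2}D_\tau(T)$ after the square-root step) instead of absorbing it into the dissipation, which avoids invoking smallness of $E(T)$ at this stage.
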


\begin{proof} The proof is divided into three steps.\\

\textit{Step 1. The
$\widetilde{L}^\infty_{T}(\dot{B}^{\sigma}_{2,r})$ estimate of $W$
and the $\widetilde{L}^2_{T}(\dot{B}^{\sigma}_{2,r})$ one of
$W_{2}$}

Applying the homogeneous localization operator
$\dot{\Delta}_{q}(q\in \mathbb{Z})$ to (\ref{R-E9}), and taking the
inner product with $\dot{\Delta}_{q}$, we obtain
\begin{eqnarray}
&&A^{0}(W)\dot{\Delta}_{q}W_{t}+\sum_{j=1}^{d}A^{j}(W)\dot{\Delta}_{q}W_{x_{j}}+L\dot{\Delta}_{q}W\nonumber\\&=&
-[\dot{\Delta}_{q},A^{0}(W)]\partial_{t}W-\sum_{j=1}^{d}[\dot{\Delta}_{q},A^{j}(W)]\partial_{x_{j}}W+r(W)\label{R-E14}
\end{eqnarray}
with
$$L\dot{\Delta}_{q}W:=\frac{1}{\tau}\left(
                        \begin{array}{c}
                          0 \\
                          p'(\bar{\rho})\dot{\Delta}_{q}W_{2} \\
                        \end{array}
                      \right)\ \mbox{and}\ \ r(W):=\frac{1}{\tau}\left(
                        \begin{array}{c}
                          0 \\
                          \dot{\Delta}_{q}[(p'(\bar{\rho})-p'(\rho))W_{2}] \\
                        \end{array}
                      \right),
$$
where the commutator $[\cdot,\cdot]$ is defined by $[f,g]:=fg-gf$.

Take the $L^2$-inner product of (\ref{R-E14}) with
$\dot{\Delta}_{q}W$ to get
\begin{eqnarray}
\frac{1}{2}\langle
A^{0}(W)\dot{\Delta}_{q}W,\dot{\Delta}_{q}W\rangle_{t}+\frac{p'(\bar{\rho})}{\tau}\|\dot{\Delta}_{q}W_{2}\|^2_{L^2}
=\sum_{i=1}^{5}I_{i}(t),\label{R-E15}
\end{eqnarray}
where
$$I_{1}:=\frac{1}{2}\langle \partial_{t}A^{0}(W)\dot{\Delta}_{q}W,\dot{\Delta}_{q}W\rangle,\ \ \ I_{2}:=\frac{1}{2}\sum_{j=1}^{d}\langle\partial_{x_{j}}A^{j}(W)\dot{\Delta}_{q}W,\dot{\Delta}_{q}W\rangle,$$
$$I_{3}:=-\langle[\dot{\Delta}_{q},A^{0}(W)]\partial_{t}W,\dot{\Delta}_{q}W\rangle,\ \ I_{4}:=-\sum_{j=1}^{d}\langle
[\dot{\Delta}_{q},A^{j}(W)]\partial_{x_{j}}W,\dot{\Delta}_{q}W\rangle,\
\ I_{5}:=\langle r(W),\dot{\Delta}_{q}W\rangle.$$

In what follows, we begin to bound these nonlinear energy terms.
Firstly, remark that
\begin{eqnarray}
A^{0}(W)&=&\left(
           \begin{array}{cc}
             1 & W_{2}^{\top} \\
             W_{2} & W_{2}\otimes W_{2} \\
           \end{array}
         \right)+\left(
                   \begin{array}{cc}
                     0 & 0 \\
                    0 & p'(\rho)I_{d} \\
                   \end{array}
                 \right)\nonumber\\&:=&A^{0}_{I}(W_{2})+A^{0}_{II}(W),\label{R-E16}
\end{eqnarray}
we further get
\begin{eqnarray}
\partial_{t}A^{0}_{I}(W_{2})=\left(
                               \begin{array}{cc}
                                 0 & \partial_{t}W_{2}^{\top} \\
                                 \partial_{t}W_{2} & \partial_{t}W_{2}\otimes W_{2}+W_{2}\otimes \partial_{t}W_{2} \\
                               \end{array}
                             \right)\label{R-E17}
\end{eqnarray}
and
\begin{eqnarray}
\partial_{t}A^{0}_{II}(W)=\left(
                   \begin{array}{cc}
                     0 & 0 \\
                    0 & p''(\rho)(D_{W}\rho(W)\partial_{t}W)I_{d} \\
                   \end{array}
                 \right),\label{R-E18}
\end{eqnarray}
where $D_{W}$ stands for the (row) gradient operator with respect to
$W$. From the decomposition in (\ref{R-E16}) of $A^{0}(W)$, $I_{1}$
can be written as the sum
\begin{eqnarray}
I_{1}(t)=\frac{1}{2}\langle
\partial_{t}A^{0}_{I}(W_{2})\dot{\Delta}_{q}W,\dot{\Delta}_{q}W\rangle+\frac{1}{2}\langle
\partial_{t}A^{0}_{II}(W)\dot{\Delta}_{q}W,\dot{\Delta}_{q}W\rangle=:I_{11}(t)+I_{12}(t).\label{R-E19}
\end{eqnarray}
With the aid of the embedding properties in Lemma \ref{lem2.2} and
Remark \ref{rem2.1}, we obtain
\begin{eqnarray}
\Big|\int_{0}^{T}I_{11}(t)dt\Big|&=&\Big|\int_{0}^{T}\int_{\mathbb{R}^d}(\partial_{t}W_{2}\cdot\dot{\Delta}_{q}W_{2}\dot{\Delta}_{q}W_{1}
+\frac{1}{2}|\dot{\Delta}_{q}W_{2}|^2W_{2}\cdot\partial_{t}W_{2})\Big|\nonumber\\&\leq&C(S(T))\int_{0}^{T}\|\partial_{t}W_{2}\|_{L^\infty}
\|\dot{\Delta}_{q}W_{2}\|_{L^2}\|\dot{\Delta}_{q}W\|_{L^2}dt\nonumber\\&\leq&C(S(T))\int_{0}^{T}\Big(\|\nabla
W\|_{L^\infty}+\frac{1}{\tau}\|W_{2}\|_{L^\infty}\Big)\|\dot{\Delta}_{q}W_{2}\|_{L^2}\|\dot{\Delta}_{q}W\|_{L^2}dt
\nonumber\\&\leq&C(S(T))\|\nabla
W\|_{L_{T}^\infty(L^\infty)}\|\dot{\Delta}_{q}W_{2}\|_{L_{T}^2(L^2)}\|\dot{\Delta}_{q}W\|_{L_{T}^2(L^2)}
\nonumber\\&&+\frac{1}{\tau}\|\dot{\Delta}_{q}W\|_{L_{T}^\infty(L^2)}\|W_{2}\|_{L_{T}^2(L^\infty)}\|\dot{\Delta}_{q}W_{2}\|_{L_{T}^2(L^2)}.\label{R-E20}
\end{eqnarray}
Furthermore, multiplying the factor $2^{2q\sigma}$ on both sides of
(\ref{R-E20}) gives
\begin{eqnarray}
2^{2q\sigma}\Big|\int_{0}^{T}I_{11}(t)dt\Big|&\leq&
C(S(T))c_{q}^2\|W-\bar{W}\|_{\widetilde{L}_{T}^\infty(\dot{B}^{\sigma}_{2,r})}\Big(\|W_{2}\|_{\widetilde{L}_{T}^2(\dot{B}^{\sigma}_{2,r})}
\|\nabla
W\|_{\widetilde{L}_{T}^2(\dot{B}^{\sigma-1}_{2,r})}\nonumber\\&&+\frac{1}{\tau}\|W_{2}\|_{\widetilde{L}_{T}^2(\dot{B}^{\sigma-1}_{2,r})}
\|W_{2}\|_{\widetilde{L}_{T}^2(\dot{B}^{\sigma}_{2,r})}\Big).\label{R-E21}
\end{eqnarray}
Here and below, $\{c_{q}\}$ denotes some sequence which satisfies
$\|(c_{q})\|_{ {l^{r}}}\leq1$ although each $\{c_{q}\}$ is possibly
different in (\ref{R-E21}). Allow us to abuse the notation for
simplicity. Similarly, we have
\begin{eqnarray}
2^{2q\sigma}\Big|\int_{0}^{T}I_{12}(t)dt\Big|&\leq&C(S(T))2^{2q\sigma}
\int_{0}^{T}\int_{\mathbb{R}^d}|\partial_{t}W||\dot{\Delta}_{q}W_{2}|^2dxdt\nonumber\\&\leq&
C(S(T))2^{2q\sigma}\Big(\|\nabla
W\|_{L^\infty_{T}(L^\infty)}+\frac{1}{\tau}\|W_{2}\|_{L^\infty_{T}(L^\infty)}\Big)\|\dot{\Delta}_{q}W_{2}\|^2_{L^2_{T}(L^2)}
\nonumber\\&\leq&C(S(T))c_{q}^2\|W-\bar{W}\|_{\widetilde{L}_{T}^\infty(\dot{B}^{\sigma}_{2,r})}\frac{1}{\tau}\|W_{2}\|^2_{\widetilde{L}_{T}^2(\dot{B}^{\sigma}_{2,r})},\label{R-E22}
\end{eqnarray}
where we used the smallness of $\tau (0<\tau\leq1)$.

Combining (\ref{R-E21})-(\ref{R-E22}), it follows from the basic
fact in Lemma \ref{lem4.1} and Young's inequality that
\begin{eqnarray}
2^{2q\sigma}\Big|\int_{0}^{T}I_{1}(t)dt\Big|\leq
C(S(T))c_{q}^2E(T)D_{\tau}(T)^{2}.\label{R-E23}
\end{eqnarray}
Secondly, we turn to estimate $I_{2}$. For this purpose, we write
\begin{eqnarray}
A^{j}(W)&=&\left(
          \begin{array}{cc}
            W_{2j} & W_{2}^{\top}W_{2j} \\
            W_{2}W_{2j} & W_{2j}(W_{2}\otimes W_{2}) \\
          \end{array}
        \right)\nonumber\\&&+\left(
          \begin{array}{cc}
            0 & p'(\rho)e_{j}^{\top} \\
            p'(\rho)e_{j} & W_{2j}p'(\rho)I_{d}+p'(\rho)(W_{2}\otimes e_{j}+e_{j}\otimes W_{2}) \\
          \end{array}
        \right)\nonumber\\&:=&A^{j}_{I}(W_{2})+A^{j}_{II}(W).\label{R-E24}
\end{eqnarray}
Therefore, it is not difficult to get
\begin{eqnarray}
&&2^{2q\sigma}\Big|\int_{0}^{T}\frac{1}{2}\sum_{j=1}^{d}\langle\partial_{x_{j}}A^{j}_{I}(W_{2})\dot{\Delta}_{q}W,\dot{\Delta}_{q}W\rangle
dt\Big|\nonumber\\&\leq&C(S(T))2^{2q\sigma}\|\nabla
W_{2}\|_{L_{T}^2(L^\infty)}\|\dot{\Delta}_{q}W\|_{L_{T}^\infty(L^2)}\|\dot{\Delta}_{q}W\|_{L_{T}^2(L^2)}
\nonumber\\&\leq&C(S(T))c_{q}^2\|W-\bar{W}\|_{\widetilde{L}_{T}^\infty(\dot{B}^{\sigma}_{2,r})}\|W_{2}\|_{\widetilde{L}_{T}^2(\dot{B}^{\sigma}_{2,r})}
\|\nabla W\|_{\widetilde{L}_{T}^2(\dot{B}^{\sigma-1}_{2,r})}
\nonumber\\&\leq&C(S(T))c_{q}^2\|W-\bar{W}\|_{\widetilde{L}_{T}^\infty(B^{\sigma}_{2,r})}\Big(\frac{1}{\tau}\|W_{2}\|^2_{\widetilde{L}_{T}^2(B^{\sigma}_{2,r})}
+\tau\|\nabla
W\|^2_{\widetilde{L}_{T}^2(B^{\sigma-1}_{2,r})}\Big)\nonumber\\&\leq&C(S(T))c_{q}^2E(T)D_{\tau}(T)^{2}.\label{R-E25}
\end{eqnarray}
Thanks to the null block located at the first line and the first
column of $A^{j}_{II}(W)$, we are led to the estimate
\begin{eqnarray}
&&2^{2q\sigma}\Big|\int_{0}^{T}\frac{1}{2}\sum_{j=1}^{d}\langle\partial_{x_{j}}A^{j}_{II}(W_{2})\dot{\Delta}_{q}W,\dot{\Delta}_{q}W\rangle
dt\Big|\nonumber\\&\leq&C(S(T))2^{2q\sigma}\int_{0}^{T}\|\nabla
W\|_{L^\infty}\|\dot{\Delta}_{q}W_{2}\|_{L^2}\|\dot{\Delta}_{q}W\|_{L^2}dt
\nonumber\\&\leq&C(S(T))c_{q}^2\|W-\bar{W}\|_{\widetilde{L}_{T}^\infty(B^{\sigma}_{2,r})}\Big(\frac{1}{\tau}\|W_{2}\|^2_{\widetilde{L}_{T}^2(B^{\sigma}_{2,r})}
+\tau\|\nabla
W\|^2_{\widetilde{L}_{T}^2(B^{\sigma-1}_{2,r})}\Big)\nonumber\\&\leq&C(S(T))c_{q}^2E(T)D_{\tau}(T)^{2}.\label{R-E26}
\end{eqnarray}
Hence, together with (\ref{R-E25})-(\ref{R-E26}), we arrive at
\begin{eqnarray}
2^{2q\sigma}\Big|\int_{0}^{T}I_{2}dt\Big|\leq
C(S(T))c_{q}^2E(T)D_{\tau}(T)^{2}.\label{R-E27}
\end{eqnarray}

Thirdly, we also use the decomposition (\ref{R-E16}) of $A^{0}(W)$
in order to estimate the commutator occurring in $I_{3}$. Using the
commutator estimate (\ref{R-E59}) in Proposition \ref{prop5.1}, we
get
\begin{eqnarray}
&&2^{2q\sigma}\Big|\int_{0}^{T}-\langle[\dot{\Delta}_{q},A^{0}_{I}(W_{2})]\partial_{t}W,\dot{\Delta}_{q}W\rangle
dt\Big|\nonumber\\&\leq&2^{2q\sigma}\|[\dot{\Delta}_{q},A^{0}_{I}(W_{2})]\partial_{t}W\|_{L^1_{T}(L^2)}\|\dot{\Delta}_{q}W\|_{L^\infty_{T}(L^2)}
\nonumber\\&\leq& C2^{q\sigma}c_{q}\Big(\|\nabla
A^{0}_{I}(W_{2})\|_{L^2_{T}(L^\infty)}\|\partial_{t}W\|_{\widetilde{L}_{T}^2(\dot{B}^{\sigma-1}_{2,r})}\nonumber\\&&\hspace{5mm}+
\|\partial_{t}W\|_{L^2_{T}(L^\infty)}\|A^{0}_{I}(W_{2})\|_{\widetilde{L}_{T}^2(\dot{B}^{\sigma}_{2,r})}\Big)\|\dot{\Delta}_{q}W\|_{L^\infty_{T}(L^2)}
\nonumber\\&\leq&
C2^{q\sigma}c_{q}\|A^{0}_{I}(W_{2})\|_{\widetilde{L}_{T}^2(\dot{B}^{\sigma}_{2,r})}\|\partial_{t}W\|_{\widetilde{L}_{T}^2(\dot{B}^{\sigma-1}_{2,r})}
\|\dot{\Delta}_{q}W\|_{L^\infty_{T}(L^2)}
\nonumber\\&\leq&C(S(T))c^2_{q}\|W-\bar{W}\|_{\widetilde{L}_{T}^\infty(\dot{B}^{\sigma}_{2,r})}\nonumber\\&&\hspace{5mm}\times\Big(\|\nabla
W\|_{\widetilde{L}_{T}^2(\dot{B}^{\sigma-1}_{2,r})}+\frac{1}{\tau}\|W_{2}\|_{\widetilde{L}_{T}^2(\dot{B}^{\sigma-1}_{2,r})}\Big)\|W_{2}\|_{\widetilde{L}_{T}^2(\dot{B}^{\sigma}_{2,r})}
\nonumber\\&\leq&C(S(T))c_{q}^2E(T)D_{\tau}(T)^{2},\label{R-E28}
\end{eqnarray}
where $(\sigma, r)$ satisfies the condition (\ref{R-E666}) and we
take  $\theta_{1}=\theta_{2}=\theta_{3}=\theta_{4}=2$ in
(\ref{R-E59}).

Recall that $p'(\rho)$ can be viewed as a function of $W$ and the
nice construction of $A^{0}_{II}(W)$, we gather that
\begin{eqnarray}
&&2^{2q\sigma}\Big|\int_{0}^{T}-\langle[\dot{\Delta}_{q},A^{0}_{II}(W)]\partial_{t}W,\dot{\Delta}_{q}W\rangle
dt\Big|\nonumber\\&\leq&2^{2q\sigma}\int_{0}^{T}\|[\dot{\Delta}_{q},p'(\rho)I_{d}]\partial_{t}W_{2}\|_{L^2}\|\dot{\Delta}_{q}W_{2}\|_{L^2}dt
\nonumber\\&\leq&C2^{q\sigma}c_{q}\Big(\|\nabla
p'(\rho)\|_{L^\infty_{T}(L^\infty)}\|\partial_{t}W_{2}\|_{\widetilde{L}_{T}^2(\dot{B}^{\sigma-1}_{2,r})}+
\|\partial_{t}W_{2}\|_{L^2(L^\infty)}\|p'(\rho)\|_{\widetilde{L}_{T}^\infty(\dot{B}^{\sigma}_{2,r})}\Big)
\nonumber\\&\leq&
C2^{q\sigma}c_{q}\|p'(\rho)\|_{\widetilde{L}_{T}^\infty(\dot{B}^{\sigma}_{2,r})}\|\partial_{t}W_{2}\|_{\widetilde{L}_{T}^2(\dot{B}^{\sigma-1}_{2,r})}\|\dot{\Delta}_{q}W_{2}\|_{L^2_{T}(L^2)}
\nonumber\\&\leq&C(S(T))c_{q}^2E(T)D_{\tau}(T)^{2},\label{R-E29}
\end{eqnarray}
where we used the commutator estimates in Proposition \ref{prop5.1}
and the homogeneous case of Proposition \ref{prop2.2}.

Therefore, we finally get
\begin{eqnarray}
2^{2q\sigma}\Big|\int_{0}^{T}I_{3}dt\Big|\leq
C(S(T))c_{q}^2E(T)D_{\tau}(T)^{2}.\label{R-E30}
\end{eqnarray}

The estimates of commutators arising in $I_{4}$ are actually simpler
than (\ref{R-E28})-(\ref{R-E29}), since they only involve spatial
derivatives. Let us give the simplified steps only:
\begin{eqnarray}
&&2^{2q\sigma}\Big|\int_{0}^{T}-\sum_{j=1}^{d}\langle
[\dot{\Delta}_{q},A^{j}_{I}(W_{2})]\partial_{x_{j}}W,\dot{\Delta}_{q}W\rangle\Big|dt\nonumber\\&\leq&C(S(T))c_{q}^2
\|W-\bar{W}\|_{\widetilde{L}_{T}^\infty(\dot{B}^{\sigma}_{2,r})}\|W_{2}\|_{\widetilde{L}_{T}^2(\dot{B}^{\sigma}_{2,r})}
\|\nabla
W\|_{\widetilde{L}_{T}^2(\dot{B}^{\sigma-1}_{2,r})}\nonumber\\&\leq&
C(S(T))c_{q}^2E(T)D_{\tau}(T)^{2},\label{R-E31}
\end{eqnarray}
and
\begin{eqnarray}
&&2^{2q\sigma}\Big|\int_{0}^{T}-\sum_{j=1}^{d}\langle
[\dot{\Delta}_{q},A^{j}_{II}(W)]\partial_{x_{j}}W,\dot{\Delta}_{q}W\rangle\Big|dt\nonumber\\&\leq&C2^{2q\sigma}
\int_{0}^{T}\Big(\|[\dot{\Delta}_{q},A^{j}_{II1}(W)]\partial_{x_{j}}W_{2}\|\|\dot{\Delta}_{q}W\|_{L^2}+
\|[\dot{\Delta}_{q},A^{j}_{II2}(W)]\partial_{x_{j}}W\|\|\dot{\Delta}_{q}W_{2}\|_{L^2}\Big)dt
\nonumber\\&\leq&C(S(T))c_{q}^2
\|W-\bar{W}\|_{\widetilde{L}_{T}^\infty(\dot{B}^{\sigma}_{2,r})}\|W_{2}\|_{\widetilde{L}_{T}^2(\dot{B}^{\sigma}_{2,r})}
\|\nabla
W\|_{\widetilde{L}_{T}^2(\dot{B}^{\sigma-1}_{2,r})}\nonumber\\&\leq&
C(S(T))c_{q}^2E(T)D_{\tau}(T)^{2},\label{R-E32}
\end{eqnarray}
where $A^{j}_{II1}(W)$ and $A^{j}_{II2}(W)$ can be viewed as the
smooth functions of $W$.

Hence, from (\ref{R-E31})-(\ref{R-E32}), we deduce that
\begin{eqnarray}
2^{2q\sigma}\Big|\int_{0}^{T}I_{4}dt\Big|\leq
C(S(T))c_{q}^2E(T)D_{\tau}(T)^{2}.\label{R-E33}
\end{eqnarray}
Finally, due to the homogeneous cases of Propositions
\ref{prop2.1}-\ref{prop2.2}, $I_{5}$ can be estimated as
\begin{eqnarray}
2^{2q\sigma}\Big|\int_{0}^{T}I_{5}dt\Big|&\leq&\frac{2^{2q\sigma}}{\tau}\int_{0}^{T}\|\dot{\Delta}_{q}[(p'(\bar{\rho})-p'(\rho))W_{2}]
\|_{L^2}\|\dot{\Delta}_{q}W_{2}\|_{L^2}dt\nonumber\\&\leq&\frac{c_{q}^2}{\tau}\|(p'(\bar{\rho})-p'(\rho))W_{2}\|_{\widetilde{L}_{T}^2(\dot{B}^{\sigma}_{2,r})}
\|W_{2}\|_{\widetilde{L}_{T}^2(\dot{B}^{\sigma}_{2,r})}\nonumber\\&\leq&C(S(T))\frac{c_{q}^2}{\tau}
\|W-\bar{W}\|_{\widetilde{L}_{T}^\infty(\dot{B}^{\sigma}_{2,r})}\|W_{2}\|^2_{\widetilde{L}_{T}^2(\dot{B}^{\sigma}_{2,r})}\nonumber\\&\leq&C(S(T))c_{q}^2E(T)D_{\tau}(T)^{2}.\label{R-E34}
\end{eqnarray}

In addition, we define
$$\|f\|_{L^2_{\tilde{A}^{0}}}:=\langle A^{0}(W)f,f\rangle^{1/2}.$$
Clearly, the norm $\|f\|_{L^2_{A^{0}}}\thickapprox\|f\|_{L^2}$,
since $W(t,x)$ takes values in a neighborhood of $\bar{W}$.
Consequently,  according to inequalities (\ref{R-E23}),
(\ref{R-E27}), (\ref{R-E30}) and (\ref{R-E33})-(\ref{R-E34}), we
conclude that
\begin{eqnarray}
&&2^{2q\sigma}\|\dot{\Delta}_{q}(W(t)-\bar{W})\|^2_{L^2}+\frac{2^{2q\sigma}}{\tau}\|\dot{\Delta}_{q}W_{2}\|^2_{L^2_{t}(L^2)}\nonumber\\&\leq&
C(S(T))2^{2q\sigma}\|\dot{\Delta}_{q}(W_{0}-\bar{W})\|^2_{L^2}+C(S(T))c_{q}^2E(T)D_{\tau}(T)^{2}.\label{R-E35}
\end{eqnarray}
Then it follows from the classical Young's inequality that
\begin{eqnarray}
&&2^{q\sigma}\|\dot{\Delta}_{q}(W-\bar{W})\|_{L^\infty_{T}(L^2)}+\frac{2^{q\sigma}}{\sqrt{\tau}}\|\dot{\Delta}_{q}W_{2}\|_{L^2_{T}(L^2)}\nonumber\\&\leq&
C(S(T))2^{q\sigma}\|\dot{\Delta}_{q}(W_{0}-\bar{W})\|_{L^2}+C(S(T))c_{q}E(T)^{1/2}D_{\tau}(T).\label{R-E36}
\end{eqnarray}
Hence, taking the $\ell^{r}$-norm on $q\in \mathbb{Z}$ on both sides
of (\ref{R-E36}) gives immediately
\begin{eqnarray}
\|W-\bar{W}\|_{\widetilde{L}_{T}^\infty(\dot{B}^{\sigma}_{2,r})}+\frac{1}{\sqrt{\tau}}\|W_{2}\|_{\widetilde{L}_{T}^2(\dot{B}^{\sigma}_{2,r})}\leq
C(S(T))(E(0)+E(T)^{1/2}D_{\tau}(T)).\label{R-E37}
\end{eqnarray}\\

\textit{Step 2. The $L^\infty_{T}(L^2)$ estimate of $W$ and the
$L^2_{T}(L^2)$ one of $W_{2}$}

It is convenient to introduce the relative entropy
$\tilde{\eta}(\rho,\mathbf{m})$ by
\begin{eqnarray}
\tilde{\eta}(\rho,\mathbf{m}):=\eta(\rho,\mathbf{m})-\eta(\bar{\rho},\mathbf{0})-D_{\rho}
\eta(\bar{\rho},\mathbf{0})(\rho-\bar{\rho}).\label{R-E38}
\end{eqnarray}
Then the strictly convex quantity $\tilde{\eta}(\rho,\mathbf{m})$
satisfies
\begin{eqnarray}\tilde{\eta}(\rho,\mathbf{m})\geq0,\ \ \tilde{\eta}(\bar{\rho},\mathbf{0})=0,\ \  D_{\rho}\tilde{\eta}(\bar{\rho},\mathbf{0})=0.
\label{R-E39}
\end{eqnarray}
Furthermore, $\tilde{\eta}(U)$ is equivalent to the quadratic
function $|\rho-\bar{\rho}|^2+|\mathbf{m}|^2$ and hence to
$|W-\bar{W}|^2$, since $|\rho-\bar{\rho}|+|\mathbf{m}|$ lies in a
bounded set. Accordingly, the entropy flux
$q_{j}(\rho,\mathbf{m})(j=1,2.\cdot\cdot\cdot,d)$ are modified as
follows
\begin{eqnarray}
\tilde{q}_{j}(\rho,\mathbf{m})=q_{j}(\rho,\mathbf{m})-D_{\rho}
\eta(\bar{\rho},\mathbf{0})\mathbf{m}_{j},\label{R-E40}
\end{eqnarray}
and we get the entropy-entropy flux equation
\begin{eqnarray}
\partial_{t}\tilde{\eta}(\rho,\mathbf{m})+\sum_{j=1}^{d}\partial_{x_{j}}\tilde{q}^{j}(\rho,\mathbf{m})=-\frac{1}{\tau}\rho|\mathbf{v}|^2.\label{R-E41}
\label{R-E44}
\end{eqnarray}
Integrating (\ref{R-E41}) over $[0,T]\times \mathbb{R}^{N}$ implies
immediately
\begin{eqnarray}
\|W-\bar{W}\|_{L^\infty_{T}(L^2)}+\frac{1}{\sqrt{\tau}}\|W_{2}\|_{L^2_{T}(L^2)}&\leq&
C(S(T))\|W_{0}-\bar{W}\|_{L^2}\nonumber\\&\leq&
C(S(T))(E(0)+E(T)^{1/2}D_{\tau}(T)).\label{R-E42}
\end{eqnarray}

\textit{Step 3. Combining the above analysis.}

Note that the fact (\ref{R-E43}), (\ref{R-E13}) is followed by
(\ref{R-E37}) and (\ref{R-E42}).
\end{proof}

The next step consists in deriving the
$\widetilde{L}^2_{T}(B^{\sigma-1}_{2,r})$ estimate of $\nabla W$. To
this end, we use an important stability condition
(``Shizuta-Kawashima" condition), which has been first formulated in
\cite{SK} by the second author, and well developed in
\cite{KY,XK,Y}.  As shown by the recent work \cite{XK}, the
compressible Euler equations with relaxation satisfies the stability
condition at the constant state naturally. Furthermore, we obtain
the concrete content of the compensating matrix $K(\xi)$, which has
been pointed out by \cite{CG,LC}. Precisely,
\begin{lem}[Shizuta-Kawashima] \label{lem4.2}For all ~$\xi\in \mathbb{R}^{N},\
\xi\neq0$, there exists a $(N+1)\times(N+1)$ matrix $K(\xi)$
depending smooth on the unit sphere $\mathbb{S}^{N-1}$:
\begin{eqnarray*}
K(\xi)=\left(%
\begin{array}{cc}
  0 & \frac{1}{p'(\bar{\rho})}\frac{\xi^\top}{|\xi|} \\
  -\frac{\xi}{|\xi|} & 0 \\
\end{array}%
\right),
\end{eqnarray*}
such that $K(\xi)A^{0}(\bar{W})$ is skew-symmetric and
\begin{eqnarray}
K(\xi)\sum_{j=1}^{N}\xi_{j}A^{j}(\bar{W})=\left(%
\begin{array}{cc}
  |\xi| & 0 \\
  0 & -p'(\bar{\rho})\frac{\xi\otimes\xi}{|\xi|} \\
\end{array}%
\right),\label{R-E44}
\end{eqnarray}
where $A^0$ and $A^{j}$ are the matrices appearing in the symmetric
system (\ref{R-E9}).
\end{lem}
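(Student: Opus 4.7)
The plan is to verify the Shizuta-Kawashima condition by direct computation, exploiting the fact that at the reference state $\bar{W}=(h'(\bar{\rho}),\mathbf{0})$ the entropy velocity variable $W_{2}$ vanishes, which massively simplifies the matrices $A^{0}(W)$ and $A^{j}(W)$ defined in Section~\ref{sec:3}.

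First I would specialize the general expressions for $A^{0}(W)$ and $A^{j}(W)$ to $\bar{W}$. Since $W_{2}=\mathbf{0}$ at the constant state, all the $W_{2}\otimes W_{2}$, $W_{2}W_{2j}$ and $W_{2}\otimes e_{j}$ type blocks disappear, so one is left with the clean forms
\begin{equation*}
A^{0}(\bar{W})=\begin{pmatrix} 1 & \mathbf{0}^{\top} \\ \mathbf{0} & p'(\bar{\rho})I_{d} \end{pmatrix},\qquad
A^{j}(\bar{W})=\begin{pmatrix} 0 & p'(\bar{\rho})e_{j}^{\top} \\ p'(\bar{\rho})e_{j} & 0 \end{pmatrix}.
\end{equation*}

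Next I would multiply $K(\xi)$ by $A^{0}(\bar{W})$ block-wise. The right factor is block-diagonal with scalar $1$ and scalar $p'(\bar{\rho})$ times the identity, so the multiplication simply rescales the two nonzero blocks of $K(\xi)$: the upper-right block becomes $\xi^{\top}/|\xi|$ and the lower-left block becomes $-\xi/|\xi|$. These are negative transposes of each other and the diagonal blocks are zero, which verifies that $K(\xi)A^{0}(\bar{W})$ is skew-symmetric.

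For the second identity I would form $\sum_{j=1}^{N}\xi_{j}A^{j}(\bar{W})=\begin{pmatrix} 0 & p'(\bar{\rho})\xi^{\top}\\ p'(\bar{\rho})\xi & 0\end{pmatrix}$ and then compute $K(\xi)$ times this matrix. The off-diagonal blocks of the product vanish because each row of $K(\xi)$ and each column of the sum share a zero block. The $(1,1)$ entry equals $\bigl(\tfrac{1}{p'(\bar{\rho})}\tfrac{\xi^{\top}}{|\xi|}\bigr)\bigl(p'(\bar{\rho})\xi\bigr)=|\xi|$, and the $(2,2)$ block equals $\bigl(-\tfrac{\xi}{|\xi|}\bigr)\bigl(p'(\bar{\rho})\xi^{\top}\bigr)=-p'(\bar{\rho})\tfrac{\xi\otimes\xi}{|\xi|}$, which is exactly (\ref{R-E44}).

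There is no real obstacle here: the construction of $K(\xi)$ was already given in the statement, so the lemma reduces to verifying two matrix identities. The only thing to be careful about is keeping the block structure straight (scalar top-left versus $d\times d$ bottom-right) and correctly distributing the factors of $|\xi|$ and $p'(\bar{\rho})$. I would also remark, for the reader's benefit, that the smoothness of $K(\xi)$ on $\mathbb{S}^{N-1}$ is immediate from the explicit formula, since $\xi/|\xi|$ is smooth away from the origin.
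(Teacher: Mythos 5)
Your verification is correct and complete: specializing $A^{0}$ and $A^{j}$ at $\bar{W}$ (where $W_{2}=\mathbf{0}$) and multiplying blockwise gives exactly $K(\xi)A^{0}(\bar{W})=\bigl(\begin{smallmatrix}0 & \xi^{\top}/|\xi|\\ -\xi/|\xi| & 0\end{smallmatrix}\bigr)$ and the identity (\ref{R-E44}), with the factors of $p'(\bar{\rho})$ and $|\xi|$ correctly tracked. The paper itself offers no proof of this lemma, deferring to \cite{CG,LC,XK}, and your direct computation is precisely the standard argument those references use.
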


Next we shall use the skew-symmetry of the compensating matrix
$K(\xi)$ in the Fourier spaces to establish the estimate of $\nabla
W$.
\begin{lem}\label{lem4.3}Under the assumptions stated in Proposition
\ref{prop4.1}, there exists a non-decreasing continuous function
$C:\mathbb{R}^{+}\rightarrow \mathbb{R}^{+}$ which is independent of
$\tau$, such that the following estimate holds:
\begin{eqnarray}
\sqrt{\tau}\|\nabla W\|_{\widetilde{L}^2(B^{\sigma-1}_{2,r})}\leq
C(S(T))\Big(E(0)+E(T)^{1/2}D_{\tau}(T)+E(T)D_{\tau}(T)\Big).\label{R-E45}
\end{eqnarray}
\end{lem}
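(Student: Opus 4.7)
The strategy is to recover the missing dissipation of the non-dissipative component $W_{1}$ by exploiting the skew-symmetric compensating matrix $K(\xi)$ from Lemma~\ref{lem4.2}. I first reduce the task to controlling only $\sqrt{\tau}\|\nabla W_{1}\|$: since $0<\tau\leq 1$, Remark~\ref{rem2.1} together with (\ref{R-E13}) yields
\[
\sqrt{\tau}\|\nabla W_{2}\|_{\widetilde{L}^{2}_{T}(B^{\sigma-1}_{2,r})}\leq \sqrt{\tau}\|W_{2}\|_{\widetilde{L}^{2}_{T}(B^{\sigma}_{2,r})}\leq \frac{1}{\sqrt{\tau}}\|W_{2}\|_{\widetilde{L}^{2}_{T}(B^{\sigma}_{2,r})}\leq C(S(T))\bigl(E(0)+E(T)^{1/2}D_{\tau}(T)\bigr),
\]
so the lemma reduces to bounding $\sqrt{\tau}\|\nabla W_{1}\|_{\widetilde{L}^{2}_{T}(B^{\sigma-1}_{2,r})}$.

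For this, I would linearize (\ref{R-E9}) around $\bar W$, apply $\dot{\Delta}_{q}$, and write
\[
A^{0}(\bar W)\partial_{t}\dot{\Delta}_{q}W+\sum_{j=1}^{N}A^{j}(\bar W)\partial_{x_{j}}\dot{\Delta}_{q}W+L\dot{\Delta}_{q}W=\mathcal{R}_{q},
\]
where $\mathcal{R}_{q}$ gathers the commutators $[\dot{\Delta}_{q},A^{0}(W)]\partial_{t}W$ and $[\dot{\Delta}_{q},A^{j}(W)]\partial_{x_{j}}W$, the quasilinear perturbations generated by $A^{j}(W)-A^{j}(\bar W)$, and the nonlinear source remainder, exactly as in the previous lemma. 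Passing to the Fourier variable with $Z_{q}(t,\xi):=\widehat{\dot{\Delta}_{q}W}(t,\xi)$ and introducing the Lyapunov functional
\[
\Lambda_{q}(t):=\mathrm{Im}\int_{\mathbb{R}^{N}}\bigl(K(\xi/|\xi|)A^{0}(\bar W)Z_{q}\bigr)\cdot\overline{Z_{q}}\,d\xi,
\]
the skew-symmetry of $K(\xi/|\xi|)A^{0}(\bar W)$ makes $\Lambda_{q}$ real with $|\Lambda_{q}|\lesssim\|\dot{\Delta}_{q}W\|_{L^{2}}^{2}$. Differentiating in time and using the key identity (\ref{R-E44}) together with the Fourier localization $|\xi|\sim 2^{q}$ produces a differential inequality of the form
\[
\frac{d}{dt}\Lambda_{q}+c\,2^{q}\|\dot{\Delta}_{q}W_{1}\|_{L^{2}}^{2}\leq C\,2^{q}\|\dot{\Delta}_{q}W_{2}\|_{L^{2}}^{2}+C\|\widehat{\mathcal{R}_{q}}\|_{L^{2}}\|Z_{q}\|_{L^{2}}.
\]
Multiplying by $\tau\cdot 2^{2q\sigma-q}$, integrating on $[0,T]$, taking square roots and the $\ell^{r}$-norm in $q\in\mathbb{Z}$ reproduces $\tau\|\nabla W_{1}\|_{\widetilde{L}^{2}_{T}(\dot{B}^{\sigma-1}_{2,r})}^{2}$ on the left. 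The boundary contributions from $\Lambda_{q}(0)$ and $\Lambda_{q}(T)$ are dominated by $E(0)^{2}+E(T)^{2}$ (here $\tau\leq 1$ is essential), and the cross term $\tau\cdot 2^{2q\sigma}\|\dot{\Delta}_{q}W_{2}\|_{L^{2}}^{2}\leq\tau^{-1}\cdot 2^{2q\sigma}\|\dot{\Delta}_{q}W_{2}\|_{L^{2}}^{2}$ is absorbed into $\bigl(\tfrac{1}{\sqrt{\tau}}\|W_{2}\|_{\widetilde{L}^{2}_{T}(\dot{B}^{\sigma}_{2,r})}\bigr)^{2}$ already controlled by the previous lemma.

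The nonlinear residual $\mathcal{R}_{q}$ is then estimated with the same toolkit used before: Proposition~\ref{prop2.3} for the commutators (whose admissible range of $(\sigma,r)$ covers (\ref{R-E666}) in the homogeneous setting), Proposition~\ref{prop2.2} for the compositions $A^{j}(W)-A^{j}(\bar W)$, and Proposition~\ref{prop2.1} for the products. Whenever $\partial_{t}W$ appears it is substituted through (\ref{R-E9}) by $\nabla W$ and $W_{2}/\tau$, which pair with the external $\sqrt{\tau}$ weight to recreate the ingredients of $D_{\tau}(T)$; this produces the expected bilinear $E(T)^{1/2}D_{\tau}(T)$ and trilinear $E(T)D_{\tau}(T)$ structures. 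A final application of Lemma~\ref{lem4.1}, together with the $L^{2}_{T}(L^{2})$ bound (\ref{R-E42}) from the previous lemma applied to the low-frequency block, transfers the estimate from the homogeneous to the inhomogeneous Chemin--Lerner framework. The main obstacle throughout is the uniform-in-$\tau$ bookkeeping: the SK-derived dissipation of $W_{1}$ is intrinsically hyperbolic (rate $|\xi|$), yet must be artificially weighted by $\tau$ to be compatible with the singular damping $W_{2}/\tau$. This forced weighting makes (\ref{R-E45}) structurally weaker than (\ref{R-E13})---indeed, $\sqrt{\tau}\|\nabla W\|$ rather than $\tfrac{1}{\sqrt{\tau}}\|W_{2}\|$---and explains why it must be closed \emph{jointly} with (\ref{R-E13}) rather than independently.
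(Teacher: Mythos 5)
Your overall mechanism is the right one and matches the paper's: linearize around $\bar W$, pair the localized equation in Fourier variables with the skew-symmetric compensating matrix $K(\xi)$ of Lemma \ref{lem4.2}, use the identity (\ref{R-E44}) to extract the $|\xi|$-weighted dissipation of $W$, absorb the $W_{2}$ cross terms into $\tfrac{1}{\tau}\|W_{2}\|^{2}_{\widetilde{L}^{2}_{T}(B^{\sigma}_{2,r})}$ using $\tau\leq 1$, and bound the boundary terms by $E(0)^{2}+E(T)^{2}$ with $E(T)$ then controlled by (\ref{R-E13}). Two remarks on the remainder: the paper defines $\mathcal{G}$ in (\ref{R-E47}) by inverting $A^{0}(W)$ \emph{before} localizing, so the principal part has constant coefficients, no commutators $[\dot{\Delta}_{q},\cdot]$ arise at all in this lemma, and $\partial_{t}W$ never appears; only Propositions \ref{prop2.1}--\ref{prop2.2} are needed. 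Your version, which keeps commutators and then substitutes $\partial_{t}W$ via the equation, is workable but redundant (once the quasilinear differences $A^{j}(W)-A^{j}(\bar W)$ are placed in the remainder, the constant-coefficient operators commute exactly with $\dot{\Delta}_{q}$), so you should pick one bookkeeping and not both.

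The genuine gap is in your last step, the passage from homogeneous to inhomogeneous Chemin--Lerner norms. Lemma \ref{lem4.1} requires \emph{both} the bound in $\widetilde{L}^{2}_{T}(\dot{B}^{\sigma-1}_{2,r})$ and a $T$-uniform bound in $L^{2}_{T}(L^{2})$ for $\sqrt{\tau}\,\nabla W$. You propose to supply the latter from (\ref{R-E42}) ``applied to the low-frequency block,'' but (\ref{R-E42}) only gives $\|W-\bar W\|_{L^{\infty}_{T}(L^{2})}$ (the $L^{2}_{T}(L^{2})$ control there is for $W_{2}$ alone, with the weight $1/\sqrt{\tau}$). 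Converting $L^{\infty}_{T}$ into $L^{2}_{T}$ costs a factor $\sqrt{T}$, which destroys the uniformity in $T$ that the global-in-time a priori estimate requires; there is no free square-integrability in time of $W_{1}$ at low frequencies --- producing it is precisely the job of the compensating matrix. The paper sidesteps this entirely by running the $K(\xi)$ argument on the \emph{inhomogeneous} blocks $\Delta_{q}$, $q\geq-1$, from the start: after multiplying (\ref{R-E53}) by $|\xi|$ the $1/|\xi|$ singularity of $K$ is tamed, the block $q=-1$ is handled by the same differential inequality, and (\ref{R-E55}) lands directly in $\widetilde{L}^{2}(B^{\sigma-1}_{2,r})$ with no appeal to Lemma \ref{lem4.1}. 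You should either adopt that route or explicitly rerun your Lyapunov argument on $\Delta_{-1}$ to produce the missing low-frequency $L^{2}_{T}(L^{2})$ ingredient.
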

\begin{proof}
Firstly, we linearize (\ref{R-E9}) around the constant state
$\bar{W}$:
\begin{eqnarray}
A^{0}(\bar{W})\partial_{t}\widetilde{W}+\sum_{j=1}^{d}A^{j}(\bar{W})\partial_{x_{j}}\widetilde{W}=-LW+\mathcal{G}\label{R-E46}
\end{eqnarray}
with $\widetilde{W}:=W-\bar{W}$, where $$LW:=\frac{1}{\tau}\left(
              \begin{array}{c}
                0 \\
                p'(\bar{\rho})W_{2} \\
              \end{array}
            \right)
$$
and
\begin{eqnarray}\mathcal{G}&:=&-\sum_{j=1}^{d}A^{0}(\bar{W})[A^{0}(W)^{-1}A^{j}(W)-A^{0}(\bar{W})^{-1}A^{j}(\bar{W})]\partial_{x_{j}}W
\nonumber\\&&-\frac{1}{\tau}A^{0}(\bar{W})[p'(\rho)A^{0}(W)^{-1}-p'(\bar{\rho})A^{0}(\bar{W})^{-1}]\left(
              \begin{array}{c}
                0 \\
                W_{2} \\
              \end{array}
            \right).\label{R-E47}
\end{eqnarray}
Applying the \textit{inhomogeneous} localization operator
$\Delta_{q}(q\geq-1)$ to  (\ref{R-E46}) implies
\begin{eqnarray}
A^{0}(\bar{W})\partial_{t}\Delta_{q}\widetilde{W}+\sum^{d}_{j=1}A^{j}(\bar{W})\partial_{x_{j}}\Delta_{q}\widetilde{W}=-L\Delta_{q}W+\Delta_{q}\mathcal{G}.\label{R-E48}
\end{eqnarray}
Take the Fourier transform (in the space variable $x$), multiply by
$-i\tau(\widehat{\Delta_{q}\widetilde{W}})^{\ast}K(\xi)$($^{\ast}$
transposed conjugate), and compute the real part of each term in the
resulting equality to get
\begin{eqnarray}
&&\tau \mathrm{Im}
\Big((\widehat{\Delta_{q}\widetilde{W}})^{\ast}(K(\xi)A^{0}(\bar{W}))\frac{d}{dt}\widehat{\Delta_{q}\widetilde{W}}\Big)+\tau(\widehat{\Delta_{q}\widetilde{W}})^{\ast}K(\xi)
\Big(\sum_{j=1}^{d}\xi_{j}A_{j}(\bar{W})\Big)\widehat{\Delta_{q}\widetilde{W}}\nonumber\\
&=&
-p'(\bar{\rho})\mathrm{Im}\Big((\widehat{\Delta_{q}\widetilde{W}_{1}})^{\ast}\frac{\xi^{\top}}{|\xi|}\widehat{\Delta_{q}W_{2}}\Big)
+\tau\mathrm{Im}\Big((\widehat{\Delta_{q}\widetilde{W}})^{\ast}K(\xi)\widehat{\Delta_{q}\mathcal{G}}\Big).\label{R-E49}
\end{eqnarray}
Using the skew-symmetry of $K(\xi)A^{0}(\bar{W})$, we have
\begin{eqnarray}
\mathrm{Im}
\Big((\widehat{\Delta_{q}\widetilde{W}})^{\ast}(K(\xi)A^{0}(\bar{W}))\frac{d}{dt}\widehat{\Delta_{q}\widetilde{W}}\Big)=\frac{1}{2}\frac{d}{dt}\mathrm{Im}
\Big((\widehat{\Delta_{q}\widetilde{W}})^{\ast}(K(\xi)A^{0}(\bar{W}))\widehat{\Delta_{q}\widetilde{W}}\Big).\label{R-E50}
\end{eqnarray}
Then, with the help of (\ref{R-E44}), the left-hand of (\ref{R-E49})
is bounded from below by
\begin{eqnarray}
\frac{\tau}{2}\frac{d}{dt}\mathrm{Im}
\Big((\widehat{\Delta_{q}\widetilde{W}})^{\ast}(K(\xi)A^{0}(\bar{W}))\widehat{\Delta_{q}\widetilde{W}}\Big)+\tau
p'(\bar{\rho})|\xi||\widehat{\Delta_{q}\widetilde{W}}|^2-C\tau|\xi||\widehat{\Delta_{q}W_{2}}|^2,\label{R-E51}
\end{eqnarray}
where $C>0$ is a generic constant independent of $\tau$. By Young's
inequality, the right-hand of (\ref{R-E49}) is dominated by
\begin{eqnarray}
\frac{1}{2}\tau
p'(\bar{\rho})|\xi||\widehat{\Delta_{q}\widetilde{W}}|^2+\frac{C}{\tau|\xi|}|\widehat{\Delta_{q}W_{2}}|^2+\frac{C\tau}{|\xi|}|\widehat{\Delta_{q}\mathcal{G}}|^2.\label{R-E52}
\end{eqnarray}
Hence, combine the inequalities (\ref{R-E51})-(\ref{R-E52}) to
deduce that
\begin{eqnarray}
\frac{1}{2}\tau
p'(\bar{\rho})|\xi||\widehat{\Delta_{q}\widetilde{W}}|^2&\lesssim&
\frac{1}{\tau}\Big(|\xi|+\frac{1}{|\xi|}\Big)|\widehat{\Delta_{q}W_{2}}|^2+\frac{\tau}{|\xi|}|\widehat{\Delta_{q}\mathcal{G}}|^2
\nonumber\\&&-\frac{\tau}{2}\frac{d}{dt}\mathrm{Im}
\Big((\widehat{\Delta_{q}\widetilde{W}})^{\ast}(K(\xi)A^{0}(\bar{W}))\widehat{\Delta_{q}\widetilde{W}}\Big).\label{R-E53}
\end{eqnarray}
Multiplying (\ref{R-E53}) by $|\xi|$ and integrating it over
$[0,T]\times \mathbb{R}^{N}$, then by Plancherel's theorem, we
arrive at
\begin{eqnarray}
\tau\|\Delta_{q}\nabla W\|^2_{L^2_{T}(L^2)}&\lesssim&
(\|\Delta_{q}\widetilde{W}_{0}\|^2_{L^2}+\|\Delta_{q}\nabla
\widetilde{W}_{0}\|^2_{L^2})+(\|\Delta_{q}\widetilde{W}\|^2_{L^\infty_{T}(L^2)}+\|\Delta_{q}\nabla
\widetilde{W}\|^2_{L^\infty_{T}(L^2)})\nonumber\\&&+\frac{1}{\tau}\Big(\|\Delta_{q}W_{2}\|^2_{L^2_{T}(L^2)}+\|\Delta_{q}\nabla
W_{2}\|^2_{L^2_{T}(L^2)}\Big)+\tau\|\Delta_{q}\mathcal{G}\|^2_{L^2_{T}(L^2)},\label{R-E54}
\end{eqnarray}
where we used the uniform boundedness of the matrix
$K(\xi)A^{0}(\bar{W})(\xi\neq0)$ and the smallness of $\tau
(0<\tau\leq1)$.

By multiplying the factor $2^{2q(\sigma-1)}$ and taking the
$\ell^{r}$-norm on $q\geq-1$ on both sides of (\ref{R-E54}), then
extracting the square root of the resulting inequality, we are led
to
\begin{eqnarray}
\sqrt{\tau}\|\nabla
W\|_{\widetilde{L}^2(B^{\sigma-1}_{2,r})}&\lesssim&\|W_{0}-\bar{W}\|_{B^{\sigma}_{2,r}}+\|W-\bar{W}\|_{\widetilde{L}^\infty(B^{\sigma}_{2,r})}
\nonumber\\&&+\frac{1}{\sqrt{\tau}}\|W_{2}\|_{\widetilde{L}^2(B^{\sigma}_{2,r})}+\sqrt{\tau}\|\mathcal{G}\|_{\widetilde{L}^2(B^{\sigma-1}_{2,r})}.\label{R-E55}
\end{eqnarray}
Recalling the definition in (\ref{R-E44}) of $\mathcal{G}$, with the
aid of Propositions \ref{prop2.1}-\ref{prop2.2}, we obtain
\begin{eqnarray}
\|\mathcal{G}\|_{\widetilde{L}^2(B^{\sigma-1}_{2,r})}\leq
C(S(T))\|W-\bar{W}\|_{\widetilde{L}^\infty(B^{\sigma}_{2,r})}\Big(\frac{1}{\tau}\|W_{2}\|_{\widetilde{L}^2(B^{\sigma}_{2,r})}+\|\nabla
W\|_{\widetilde{L}^2(B^{\sigma-1}_{2,r})}\Big).\label{R-E56}
\end{eqnarray}
Therefore, we get ultimately
\begin{eqnarray}
\sqrt{\tau}\|\nabla
W\|_{\widetilde{L}^2(B^{\sigma-1}_{2,r})}&\leq&C(S(T))\Big\{\|W_{0}-\bar{W}\|_{B^{\sigma}_{2,r}}+\|W-\bar{W}\|_{\widetilde{L}^\infty(B^{\sigma}_{2,r})}
+\frac{1}{\sqrt{\tau}}\|W_{2}\|_{\widetilde{L}^2(B^{\sigma}_{2,r})}\nonumber\\&&+\|W-\bar{W}\|_{\widetilde{L}^\infty(B^{\sigma}_{2,r})}\Big(\frac{1}{\sqrt{\tau}}\|W_{2}\|_{\widetilde{L}^2(B^{\sigma}_{2,r})}
+\sqrt{\tau}\|\nabla
W\|_{\widetilde{L}^2(B^{\sigma-1}_{2,r})}\Big)\Big\}\nonumber\\&\leq&C(S(T))\Big(E(0)+E(T)^{1/2}D_{\tau}(T)+E(T)D_{\tau}(T)\Big),\label{R-E57}
\end{eqnarray}
which completes the proof of Lemma \ref{lem4.3}.
\end{proof}

\noindent\textit{\underline{Proof of Proposition \ref{prop4.1}.}} By
combining (\ref{R-E13}) and (\ref{R-E45}), we conclude the nonlinear
inequality (\ref{R-E11}). In addition, the inequality (\ref{R-E12})
follows from (\ref{R-E11}) and the a priori assumption
$E(T)\leq\delta_{1}$ readily. Hence the proof of Proposition
\ref{prop4.1} is finished. $\square$

Thanks to the standard boot-strap argument, for instance, see
[\cite{MN}, Theorem 7.1] or the outline given in \cite{XW}, we
extend the local-in-time solutions in Proposition \ref{prop3.1} to
the global-in-time classical solutions of
(\ref{R-E9})-(\ref{R-E10}). Furthermore, we arrive at Theorem
\ref{thm1.1}.

The proof of Theorem \ref{thm1.2} is just the same as that in
\cite{XW}, we thus feel free to skip the details, the interested
readers is referred to \cite{XW}.

\section{Appendix}\label{sec:5}\setcounter{equation}{0}
In this section, we present a revision of commutator estimates in
Proposition \ref{prop2.3} and relax the restriction on the couple
$(s,r)$ in comparison with the commutator estimate in Proposition
\ref{prop2.3}, which enables us to construct the a priori estimates
in more general Besov spaces.

\begin{prop}\label{prop5.1}
For $s>-1$, $1\leq p\leq\infty$ and $ 1\leq r\leq\infty$, there is a
constant $C>0$ such that
\begin{eqnarray}
\|[f,\dot{\Delta}_{q}]g\|_{L^{p}} \leq
Cc_{q}2^{-q(s+1)}\Big(\|\nabla
f\|_{L^\infty}\|g\|_{\dot{B}^{s}_{p,r}}+\|g\|_{L^{p_1}}\|f\|_{\dot{B}^{s+1}_{p_2,r}}\Big)\label{R-E58}
\end{eqnarray}
and
\begin{eqnarray}
\|[f,\dot{\Delta}_{q}]g\|_{L^{\theta}_{T}(L^{p})} \leq
Cc_{q}2^{-q(s+1)}\Big(\|\nabla
f\|_{L^{\theta_1}_{T}(L^\infty)}\|g\|_{\widetilde{L}^{\theta_2}_{T}(\dot{B}^{s}_{p,r})}+\|g\|_{L^{\theta_3}_{T}(L^{p_1})}\|f\|_{\widetilde{L}^{\theta_4}_{T}(\dot{B}^{s+1}_{p_2,r})}\Big),\label{R-E59}
\end{eqnarray}
where $1/p=1/p_{1}+1/p_{2}$,
$1/\theta=1/\theta_{1}+1/\theta_{2}=1/\theta_{3}+1/\theta_{4}$ and
$c_{q}$ denotes a sequence such that $\|(c_{q})\|_{\ell^{r}}\leq1$.
\end{prop}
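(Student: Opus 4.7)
}
The plan is to expand $fg$ via Bony's paradifferential decomposition
$fg = T_f g + T_g f + R(f,g)$, where $T_f g = \sum_{q'} S_{q'-1}f\,\dot\Delta_{q'}g$ is the paraproduct and $R(f,g) = \sum_{q'} \dot\Delta_{q'}f\,\widetilde{\dot\Delta}_{q'}g$ with $\widetilde{\dot\Delta}_{q'} = \dot\Delta_{q'-1}+\dot\Delta_{q'}+\dot\Delta_{q'+1}$. Applying the same decomposition to $f\,\dot\Delta_q g$ and subtracting yields
$$[f,\dot\Delta_q]g = \bigl(T_f\dot\Delta_q g - \dot\Delta_q T_f g\bigr) + \bigl(T_{\dot\Delta_q g} f - \dot\Delta_q T_g f\bigr) + \bigl(R(f,\dot\Delta_q g) - \dot\Delta_q R(f,g)\bigr) =: E_q^1 + E_q^2 + E_q^3.$$
I will estimate each $E_q^i$ separately in $L^p$ and then show the resulting bound is dominated by the right-hand side of \eqref{R-E58} with a sequence $(c_q)\in\ell^r$ of unit norm. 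The $L^\theta_T(L^p)$ version \eqref{R-E59} will then follow by repeating every pointwise-in-time estimate and applying Hölder in $t$ with the split $1/\theta = 1/\theta_1+1/\theta_2 = 1/\theta_3+1/\theta_4$.

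For the paraproduct commutator $E_q^1$, the spectral localization of $S_{q'-1}f\,\dot\Delta_{q'}g$ in an annulus of size $2^{q'}$ forces $|q'-q|\le N_0$ in both $T_f\dot\Delta_q g$ and $\dot\Delta_q T_f g$, so that
$$E_q^1 = \sum_{|q'-q|\le N_0} [S_{q'-1}f,\dot\Delta_q]\dot\Delta_{q'}g + (\text{similarly localized finite remainder}).$$
Writing $\dot\Delta_q h(x) = 2^{qN}\!\int \tilde\phi(2^q(x-y))h(y)\,dy$ and using the first-order Taylor formula $S_{q'-1}f(x)-S_{q'-1}f(y) = (x-y)\cdot\!\int_0^1 \nabla S_{q'-1}f(y+t(x-y))\,dt$, one gains a factor $2^{-q}$ via a standard convolution estimate:
$$\|[S_{q'-1}f,\dot\Delta_q]\dot\Delta_{q'}g\|_{L^p} \lesssim 2^{-q}\|\nabla f\|_{L^\infty}\|\dot\Delta_{q'}g\|_{L^p}.$$
Multiplying by $2^{q(s+1)}$ and summing over $|q'-q|\le N_0$ produces a sequence in $\ell^r$ of norm $\lesssim\|g\|_{\dot B^s_{p,r}}$, which accounts for the first contribution to \eqref{R-E58}.

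The terms $E_q^2$ and $E_q^3$ are handled by direct Hölder bounds with $1/p=1/p_1+1/p_2$. For $E_q^2$, the support restriction $S_{q'-1}\dot\Delta_q g\neq 0 \Rightarrow q'\geq q-N_0$ and the analogous restriction inside $\dot\Delta_q T_g f$ show that $E_q^2$ is a finite sum (in shifts of $q'$) of pieces bounded by
$$\|S_{q'-1}(\dot\Delta_q g)\,\dot\Delta_{q'}f\|_{L^p} \lesssim \|\dot\Delta_q g\|_{L^{p_1}}\|\dot\Delta_{q'}f\|_{L^{p_2}},$$
which after weighting by $2^{q(s+1)}$ gives exactly $\|g\|_{L^{p_1}}\|f\|_{\dot B^{s+1}_{p_2,r}}$ with an $\ell^r$ sequence. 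For the remainder $E_q^3$, spectral localization forces $q'\geq q-N_0$ in both $\dot\Delta_q R(f,g)$ and $R(f,\dot\Delta_q g)$; Hölder yields
$$\|\dot\Delta_q R(f,g)\|_{L^p} \lesssim \sum_{q'\ge q-N_0} \|\dot\Delta_{q'}f\|_{L^{p_2}}\|\widetilde{\dot\Delta}_{q'}g\|_{L^{p_1}},$$
and after multiplication by $2^{q(s+1)} = 2^{(q-q')(s+1)}\,2^{q'(s+1)}$ one uses the condition $s+1>0$ to make $2^{(q-q')(s+1)}\mathbf{1}_{q'\ge q-N_0}$ a summable convolution kernel, so that a discrete Young inequality delivers an $\ell^r$ sequence of norm $\lesssim\|g\|_{L^{p_1}}\|f\|_{\dot B^{s+1}_{p_2,r}}$.

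The main obstacle is the high-high interaction in $E_q^3$: it is precisely here that $s>-1$ is indispensable, since it is needed for the geometric sum $\sum_{q'\ge q-N_0} 2^{(q-q')(s+1)}$ to converge and produce an $\ell^r\to\ell^r$ Young-type bound. Once \eqref{R-E58} is in hand, the time-dependent version \eqref{R-E59} is obtained by repeating the same pointwise decomposition, taking $L^\theta_T$ norms after the spatial estimates, and applying Hölder in time to split the two factors according to $1/\theta=1/\theta_1+1/\theta_2$ in the paraproduct piece and $1/\theta=1/\theta_3+1/\theta_4$ in the other two pieces; the Chemin--Lerner norms on the right arise naturally because the $2^{q'(s+1)}$-weights are taken inside the $L^\theta$ norm in $t$.
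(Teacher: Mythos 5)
Your proposal is correct and follows essentially the same route as the paper's Appendix proof: the identical Bony decomposition (your $E_q^1$, $E_q^2$, $E_q^3$ simply regroup the paper's $K_1$, $K_4+K_5$, $K_2+K_3$), the first-order Taylor argument gaining $2^{-q}$ on the paraproduct commutator, H\"older with $1/p=1/p_1+1/p_2$ on the remaining pieces, and the condition $s+1>0$ to sum the high-frequency tails, with H\"older in time yielding the Chemin--Lerner version. The only slip is calling $E_q^2$ a finite sum: the piece $T_{\dot{\Delta}_{q}g}f$ runs over all $q'\gtrsim q$ and also requires the $s>-1$ geometric-sum argument, exactly as in the paper's treatment of $K_4$.
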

\begin{proof}
We decompose
\begin{eqnarray}
[f,\dot{\Delta}_{q}]g:=K_{1}+K_{2}+K_{3}+K_{4}+K_{5}
\label{R-E60}\end{eqnarray} with
$$K_{1}=[T_{f},\dot{\Delta}_{q}]g,\ \ K_{2}=R(f,\dot{\Delta}_{q}g) \ \ K_{3}=-\dot{\Delta}_{q}R(f,g),$$
$$K_{4}=T_{\dot{\Delta}_{q}g}f,\ \ K_{5}=-\dot{\Delta}_{q}T_{g}f,$$
where $T$ and $R$ stand for the paraproduct and remainder operators
(see \cite{B} by J.-M. Bony),  which are given by
$T_{f}g=\sum_{q'\leq
q-2}\dot{\Delta}_{q'}f\dot{\Delta}_{q}g=\sum_{q\in\mathbb{Z}}S_{q-1}f\dot{\Delta}_{q}g$
and $R(f,g)=\sum_{|q-q'|\leq1}\dot{\Delta}_{q}f\dot{\Delta}_{q'}g.$
(\ref{R-E60}) is obtained after noticing that
$$fg=T_{f}g+T_{g}f+R(f,g). $$
From the definition of $\dot{\Delta}_{q}$, we have the almost
orthogonal properties:
$$\dot{\Delta}_{p}\dot{\Delta}_{q}f\equiv 0 \ \ \ \mbox{if}\ \ \ |p-q|\geq 2,$$
$$\dot{\Delta}_{q}(S_{q-1}f\dot{\Delta}_{p}g)\equiv 0\ \ \ \mbox{if}\ \ \ |p-q|\geq 5.$$

Now, we go back to the proof of inequality (\ref{R-E58}). For
$K_{1}$, it follows from the Taloy's formula of first order and
Young's inequality that
\begin{eqnarray}
\|K_{1}\|_{L^p}&\leq& C\sum_{|q-k|\leq4}2^{-q}\|S_{k-1}\nabla
f\|_{L^\infty}\|\dot{\Delta}_{k}g\|_{L^p}\nonumber\\&\leq&
Cc_{q}2^{-q(s+1)}\|\nabla f\|_{L^\infty}\|g\|_{\dot{B}^{s}_{p,r}}.
\label{R-E61}
\end{eqnarray}

For $K_{2}$, we have
\begin{eqnarray*}
K_{2}&=&\sum_{k\in
\mathbb{Z}}\sum_{|k-k'|\leq1}(\dot{\Delta}_{k}f)(\dot{\Delta}_{k'}\dot{\Delta}_{q}g)
\nonumber\\&=&\sum_{|k-q|\leq2}\sum_{|k-k'|\leq1}(\dot{\Delta}_{k}f)(\dot{\Delta}_{k'}\dot{\Delta}_{q}g).
\end{eqnarray*}
Then by H\"{o}lder inequalities, we obtain
\begin{eqnarray}
\|K_{2}\|_{L^p}&\leq&
C\sum_{|k-q|\leq2}\sum_{|k-k'|\leq1}\|\dot{\Delta}_{k}f\|_{L^{p_{2}}}\|\dot{\Delta}_{k'}\dot{\Delta}_{q}g\|_{L^{p_{1}}}
\nonumber\\&\leq&C\|g\|_{L^{p_{1}}}\sum_{|k-q|\leq2}\|\dot{\Delta}_{k}f\|_{L^{p_{2}}}
\nonumber\\&\leq&
Cc_{q}2^{-q(s+1)}\|f\|_{\dot{B}^{s+1}_{p_2,r}}\|g\|_{L^{p_{1}}},\label{R-E62}
\end{eqnarray}
where $1/p=1/p_{1}+1/p_{2}$.

For $K_{3}$, we have
\begin{eqnarray*}
K_{3}&=&-\dot{\Delta}_{q}\Big(\sum_{k\in
\mathbb{Z}}\sum_{|k-k'|\leq1}\dot{\Delta}_{k}f\dot{\Delta}_{k'}g\Big)
\nonumber\\&=&-\sum_{\max(k,k')\geq
q-2}\sum_{|k-k'|\leq1}\dot{\Delta}_{q}(\dot{\Delta}_{k}f\dot{\Delta}_{k'}g)
\end{eqnarray*}
Using the H\"{o}lder inequality and Young's inequality for
sequences, we proceed $K_{3}$ as follows:
\begin{eqnarray}
\|K_{3}\|_{L^p}&\leq& C\sum_{\max(k,k')\geq q-2}
\sum_{|k-k'|\leq1}\|\dot{\Delta}_{k}f\|_{L^{p_{2}}}\|\dot{\Delta}_{k'}g\|_{L^{p_{1}}}
\nonumber\\&\leq& C\|g\|_{L^{p_{1}}}\sum_{k\geq
q-3}\|\dot{\Delta}_{k}f\|_{L^{p_{2}}}\nonumber\\
&\leq& C
c_{q}2^{-q(s+1)}\|f\|_{\dot{B}^{s+1}_{p_2,r}}\|g\|_{L^{p_{1}}},\label{R-E63}
\end{eqnarray}
where $s+1>0$ is required.

For $K_{4}$, we have
\begin{eqnarray*}
K_{4}=-\sum_{k\geq q+1}S_{k-1}\dot{\Delta}_{q}g\dot{\Delta}_{k}f.
\end{eqnarray*}
In a similar manner as $K_{3}$, we are led to the estimate
\begin{eqnarray}
\|K_{4}\|_{L^p}&\leq& C\sum_{k\geq
q+1}\|\dot{\Delta}_{k}f\|_{L^{p_{2}}}\|g\|_{L^{p_{1}}}\nonumber\\
&\leq& C
c_{q}2^{-q(s+1)}\|f\|_{\dot{B}^{s+1}_{p_2,r}}\|g\|_{L^{p_{1}}}.\label{R-E64}
\end{eqnarray}

For the estimate of $K_{5}$, we recall that
\begin{eqnarray*}
K_{5}=-\sum_{|q-k|\leq4}\dot{\Delta}_{q}(S_{k-1}g\dot{\Delta}_{k}f),
\end{eqnarray*}
furthermore, we get
\begin{eqnarray}
\|K_{5}\|_{L^p}&\leq& C\sum_{|q-k|\leq4}\|\dot{\Delta}_{k}f\|_{L^{p_{2}}}\|g\|_{L^{p_{1}}}\nonumber\\
&\leq& C
c_{q}2^{-q(s+1)}\|f\|_{\dot{B}^{s+1}_{p_2,r}}\|g\|_{L^{p_{1}}}.\label{R-E65}
\end{eqnarray}
Together with (\ref{R-E60})-(\ref{R-E65}), the inequality
(\ref{R-E58}) follows immediately.

In addition, the similar process enables us to obtain (\ref{R-E59}),
whereas the the time exponent $\theta$ behaves according to the
H\"{o}lder inequality.
\end{proof}

Following from the above proof,  it is not difficult to obtain the
commutator estimates in the inhomogeneous case.

\begin{cor}
For $s>-1$, $1\leq p\leq\infty$ and $ 1\leq r\leq\infty$, there is a
constant $C>0$ such that
\begin{eqnarray}
\|[f,\Delta_{q}]g\|_{L^{p}} \leq Cc_{q}2^{-q(s+1)}\Big(\|\nabla
f\|_{L^\infty}\|g\|_{B^{s}_{p,r}}+\|g\|_{L^{p_1}}\|f\|_{B^{s+1}_{p_2,r}}\Big)\label{R-E66}
\end{eqnarray}
and
\begin{eqnarray}
\|[f,\Delta_{q}]g\|_{L^{\theta}_{T}(L^{p})} \leq
Cc_{q}2^{-q(s+1)}\Big(\|\nabla
f\|_{L^{\theta_1}_{T}(L^\infty)}\|g\|_{\widetilde{L}^{\theta_2}_{T}(B^{s}_{p,r})}+\|g\|_{L^{\theta_3}_{T}(L^{p_1})}\|f\|_{\widetilde{L}^{\theta_4}_{T}(B^{s+1}_{p_2,r})}\Big),\label{R-E67}
\end{eqnarray}
where $1/p=1/p_{1}+1/p_{2}$,
$1/\theta=1/\theta_{1}+1/\theta_{2}=1/\theta_{3}+1/\theta_{4}$ and
$c_{q}$ denotes a sequence such that $\|(c_{q})\|_{\ell^{r}}\leq1$.
\end{cor}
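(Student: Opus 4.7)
The plan is to mirror, step by step, the proof of Proposition \ref{prop5.1}, replacing the homogeneous blocks $\dot{\Delta}_{q}$ by the inhomogeneous blocks $\Delta_{q}$ and restricting the summation index to $q\ge -1$. First I would recall the inhomogeneous Bony decomposition $fg = T_f g + T_g f + R(f,g)$, where now $T_f g = \sum_{q\ge 1}S_{q-1}f\,\Delta_{q}g$ and $R(f,g) = \sum_{q\ge -1}\sum_{|q-q'|\le 1}\Delta_{q}f\,\Delta_{q'}g$, with the usual conventions $\Delta_{q} \equiv 0$ for $q\le -2$ and $S_{q-1} = \sum_{q'\le q-2}\Delta_{q'}$. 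Applying $\Delta_{q}$ and rearranging gives exactly the same five-term splitting
$$[f,\Delta_{q}]g = K_{1}+K_{2}+K_{3}+K_{4}+K_{5},$$
with $K_{1}=[T_{f},\Delta_{q}]g$, $K_{2}=R(f,\Delta_{q}g)$, $K_{3}=-\Delta_{q}R(f,g)$, $K_{4}=T_{\Delta_{q}g}f$, and $K_{5}=-\Delta_{q}T_{g}f$.

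Next I would bound each $K_{i}$ in $L^{p}$ by transcribing the arguments already used for Proposition \ref{prop5.1}. For $K_{1}$ I exploit the first-order Taylor formula applied to $S_{k-1}f(x) - S_{k-1}f(y)$ on the kernel of $\Delta_{q}$ to pull out the factor $2^{-q}\|\nabla f\|_{L^{\infty}}$, coupled with the almost-orthogonality $\Delta_{q}(S_{k-1}f\,\Delta_{k}g)\equiv 0$ for $|q-k|\ge 5$. For $K_{2},K_{4},K_{5}$ I use H\"older's inequality with $1/p=1/p_{1}+1/p_{2}$ and the spectral localization to restrict the outer sum to $|k-q|\le 2$ or $|k-q|\le 4$, producing the $\|g\|_{L^{p_{1}}}\|f\|_{B^{s+1}_{p_{2},r}}$ contribution. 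For $K_{3}$ the condition $s>-1$ is used precisely as in the homogeneous case: Young's inequality on the tail sum $\sum_{k\ge q-3}$ of $\|\Delta_{k}f\|_{L^{p_{2}}}$ converges once the multiplier $2^{-q(s+1)}$ is factored out, yielding a sequence $c_{q}$ with $\|(c_{q})\|_{\ell^{r}}\le 1$ after normalization against $\|f\|_{B^{s+1}_{p_{2},r}}$.

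The time-dependent inequality (\ref{R-E67}) then follows by applying $L^{\theta}_{T}$ norms throughout and using H\"older in time with $1/\theta=1/\theta_{1}+1/\theta_{2}=1/\theta_{3}+1/\theta_{4}$; the Chemin--Lerner norm $\widetilde{L}^{\theta_{i}}_{T}(B^{\cdot}_{p,r})$ commutes with the $\ell^{r}$ summation in $q$ and so naturally replaces the corresponding $\widetilde{L}^{\theta_{i}}_{T}(\dot B^{\cdot}_{p,r})$ norm appearing in Proposition \ref{prop5.1}. Summing over $q\ge -1$ with $\ell^{r}$-weights $2^{q(s+1)}$ then yields the desired estimates.

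The only delicate point, and the one to flag as the main (mild) obstacle, is the low-frequency end $q=-1$, since there $S_{k-1}$ with $k\le 0$ is empty or reduces to $\Delta_{-1}$, so several of the index sets shrink or disappear. This only suppresses terms and therefore preserves the upper bounds; the convention $\Delta_{q}\equiv 0$ for $q\le -2$ together with $\|\Psi*f\|_{L^{p}}\lesssim \|f\|_{L^{p}}$ handles the boundary terms uniformly, so the inhomogeneous estimate ultimately reduces to a localization of the homogeneous one already established.
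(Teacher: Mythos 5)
Your proposal is correct and is essentially the argument the paper intends: the paper's own justification of this corollary is the single remark that it ``follows from the above proof,'' i.e.\ from transcribing the five-term Bony splitting and the estimates of $K_1,\dots,K_5$ in Proposition \ref{prop5.1} to the inhomogeneous blocks $\Delta_q$ with $q\ge-1$. Your explicit attention to the low-frequency block $q=-1$ (where the kernel is $\Psi$ rather than a rescaled $\Phi_0$, so the Taylor-formula step for $K_1$ still yields the factor $2^{-q}\|\nabla f\|_{L^\infty}$) fills in the only detail the paper leaves implicit.
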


\section*{Acknowledgments}
J. Xu is partially supported by the NSFC (11001127), Special
Foundation of China Postdoctoral Science Foundation (2012T50466),
China Postdoctoral Science Foundation (20110490134), Postdoctoral
Science Foundation of Jiangsu Province (1102057C) and the NUAA
Fundamental Research Funds (NS2013076). S. Kawashima is partially
supported by Grant-in-Aid for Scientific Research (A) 22244009.

\end{document}